\theoremstyle{plain}
    \newtheorem{thm}{Theorem}[section]
    \newtheorem{claim}[thm]{Claim}
    \newtheorem{lemma}[thm]{Lemma}
    \newtheorem{proposition}[thm]{Proposition}
    \newtheorem{question}[thm]{Question}
    \newtheorem{theorem}[thm]{Theorem}
\theoremstyle{definition}
    \newtheorem{remark}[thm]{Remark}
\theoremstyle{remark}
    \newtheorem{setup}[thm]{}
\newcommand{\BCC}{\mathbb{C}}
\newcommand{\C}{\mathbb{C}}
\newcommand{\Q}{\mathbb{Q}}
\newcommand{\R}{\mathbb{R}}
\newcommand{\BZZ}{\mathbb{Z}}
\newcommand{\Z}{\mathbb{Z}}
\newcommand{\OO}{\mathcal{O}}
\newcommand{\alb}{\operatorname{alb}}
\newcommand{\Amp}{\operatorname{Amp}}
\newcommand{\Aut}{\operatorname{Aut}}
\newcommand{\Gal}{\operatorname{Gal}}
\newcommand{\GL}{\operatorname{GL}}
\newcommand{\id}{\operatorname{id}}
\newcommand{\Imm}{\operatorname{Im}}
\newcommand{\Ker}{\operatorname{Ker}}
\newcommand{\NE}{\overline{\operatorname{NE}}}
\newcommand{\Nef}{\operatorname{Nef}}
\newcommand{\NS}{\operatorname{NS}}
\newcommand{\Sing}{\operatorname{Sing}}
\newcommand{\Alb}{\operatorname{Alb}}
\begin{document}

\title[Compact K\"ahler manifolds with automorphism groups of maximal rank]{
Compact K\"ahler manifolds with automorphism groups of maximal rank}

\author{De-Qi Zhang}
\address
{
\textsc{Department of Mathematics} \endgraf
\textsc{National University of Singapore, 10 Lower Kent Ridge Road,
Singapore 119076
}}
\email{matzdq@nus.edu.sg}

\begin{abstract}
For an automorphism group $G$ on an $n$-dimensional ($n \ge 3$) normal projective variety or
a compact K\"ahler manifold $X$ so that
$G$ modulo its subgroup $N(G)$ of null entropy elements is an abelian group of maximal rank $n-1$,
we show that
$N(G)$ is virtually contained in $\Aut_0(X)$, the $X$ is a quotient of a complex torus $T$
and $G$ is mostly descended from the symmetries on the torus $T$, provided that
both $X$ and the pair $(X, G)$ are minimal.
\end{abstract}

\subjclass[2000]{32H50, 37C85, 32M05, 14J50, 32Q15}
\keywords{automorphism group, K\"ahler manifold, Tits alternative, topological entropy}

\thanks{The author is supported by an ARF of NUS}

\maketitle

\section{Introduction}

We work over the field $\BCC$ of complex numbers.
For a linear transformation $L$ on a finite-dimensional vector space $V$
over $\C$ or its subfields,
its {\it spectral radius} is defined as
$$\rho(L) := \max \{|\lambda| \, ; \, \lambda \, \in \, \C \,\,\, \text{is an eigenvalue of} \,\,\, L\}.$$

Let $X$ be a compact complex K\"ahler manifold and $Y$ a normal projective variety,
and let $g \in \Aut(X)$ and $f \in \Aut(Y)$.
Define the (topological) {\it entropy} $h(*)$ and {\it first dynamical degrees} $d_1(*)$ as:

$$\begin{aligned}
h(g) : &= \log \rho(g^* \, | \, \oplus_{i \ge 0} \, H^i(X, \C)), \\
d_1(g) : &= \rho(g^* \, | \, H^2(X, \C)) \,\, (= \rho(g^* \, | \, H^{1,1}(X))), \\
d_1(f) : &= \rho(f^* \, | \, \NS_{\C}(Y))
\end{aligned}$$
where $\NS_{\C}(Y) := \NS(Y) \otimes_{\Z} \C$ is the {\it complexified Neron-Severi group}.
By the fundamental work of Gromov and Yomdin, the above definition of entropy is equivalent to its original definition
(cf. \cite[\S 2.2]{DS} and the references therein).
Further, when $Y$ is smooth, the above two definitions of $d_1(*)$ coincide;
for $\Q$-factorial $Y$ (cf. \cite[0.4(1)]{KM}), we have $d_1(f) = d_1(\widetilde{f})$
where $\widetilde{f}$ is the lifting of $f$ to the one on an $\Aut(Y)$-equivariant resolution of $Y$.
We call $\tau := g$ or $f$, of {\it positive entropy} (resp. {\it null entropy}) if
$d_1(\tau) > 1$ (resp. $d_1(\tau) = 1$), or equivalently
$h(\tau) > 0$ (resp. $h(\tau) = 0$) in the case of compact K\"ahler manifold.

We say that the induced action $G \, | \, H^{1,1}(X)$ is
{\it Z-connected} if its Zariski-closure in $\GL(H^{1,1}(X))$ is connected
with respect to the Zariski topology;
in this case, the {\it null set}
$$N(G) := \{g \in G \, | \, g \,\, \text{is of null entropy} \}$$
is a (necessarily normal) subgroup of $G$ (cf. \cite[Theorem 1.2]{Z-Tits}).
In \cite{Z-Tits}, we have proved:

\begin{theorem} (cf.~\cite{Z-Tits}) \label{Z-TitsTh}
Let $X$ be an $n$-dimensional $(n \ge 2)$
compact complex K\"ahler manifold and $G$ a subgroup of $\Aut(X)$.
Then one of the following two assertions holds:
\begin{itemize}
\item[(1)]
$G$ contains a subgroup isomorphic to the non-abelian free group $\BZZ * \BZZ$,
and hence $G$ contains subgroups isomorphic to non-abelian free groups of all countable ranks.
\item[(2)]
There is a finite-index subgroup $G_1$ of $G$ such that
the induced action $G_1 \, | \, H^{1,1}(X)$ is solvable and Z-connected.
Further, the subset
$$N(G_1) := \{g \in G_1 \,\, | \,\, g \,\,\, \text{\rm is of null entropy}\}$$
of $G_1$ is a normal subgroup of $G_1$ and the quotient group $G_1/N(G_1)$ is a
free abelian group of rank $r \le n-1$. We call this $r$ {\bf the rank of $G_1$ and denote it as $r = r(G_1)$}.
\end{itemize}
\end{theorem}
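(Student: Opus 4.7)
The plan is to deduce the statement from the classical Tits alternative applied to the natural linear representation $\rho : G \to \GL(H^{1,1}(X,\R))$, refined in the solvable case by the invariance of the nef cone together with a theorem of Dinh-Sibony bounding the number of independent commuting automorphisms of positive entropy. Accordingly, I would first apply Tits' theorem to the linear image $\rho(G) \subset \GL(H^{1,1}(X,\R))$. If this image contains a non-abelian free subgroup, a lifting argument using the Fujiki-Lieberman theorem (which identifies $\ker\rho$ with a finite extension of the connected complex Lie group $\Aut_0(X)$) produces a non-abelian free subgroup of $G$ itself, placing us in alternative (1). Otherwise $\rho(G)$ is virtually solvable, and I replace $G$ by a finite-index subgroup $G_1$ whose image has connected Zariski closure; this gives the $Z$-connected, solvable action $G_1 \,|\, H^{1,1}(X)$ in alternative (2).

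Second, I would exploit the fact that $\rho(G_1)$ preserves the salient convex nef cone $\overline{\SK}_X \subset H^{1,1}(X,\R)$. Combining Lie-Kolchin applied to the connected solvable Zariski closure with a Perron-Frobenius-type argument on this cone, I obtain a joint eigenvector for $G_1$ inside $\overline{\SK}_X$ whose eigenvalue defines a homomorphism $\chi_1 : G_1 \to \R_{>0}$. Iterating this construction along a $G_1$-invariant flag compatible with the cone yields finitely many characters $\chi_1,\dots,\chi_k$ whose joint kernel is precisely $N(G_1)$. Hence $N(G_1)$ is normal in $G_1$ and the quotient embeds into $\R^k$ via $g \mapsto (\log\chi_1(g),\dots,\log\chi_k(g))$, so $G_1/N(G_1)$ is torsion-free abelian.

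Third, to upgrade this quotient to a free abelian group of rank at most $n-1$, I would invoke the theorem of Dinh-Sibony: on a compact K\"ahler $n$-fold, any commuting collection of automorphisms which are $\Z$-independent modulo null entropy has size at most $n-1$. Applied inside $G_1/N(G_1)$, this forces every finitely generated subgroup to have rank $\le n-1$; combined with torsion-freeness this pins the quotient down to a free abelian group of some rank $r \le n-1$, exactly as asserted.

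The main obstacle I expect lies in the middle step: one must arrange the characters $\chi_i$ so that their joint vanishing genuinely corresponds to null entropy on the whole of $H^{1,1}(X)$, not merely on the successive cone quotients, and so that $Z$-connectedness rules out the torsion phenomena that would otherwise leave $G_1/N(G_1)$ only virtually free abelian. In particular, showing that an element acting trivially on each successive eigenline is already of null entropy on the full $H^{1,1}$ uses both the positivity supplied by the invariant cone and the connectedness of the Zariski closure in an essential way; balancing these two ingredients, rather than the bare application of Tits or Dinh-Sibony, is where the real work sits.
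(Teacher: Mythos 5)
This theorem is not proved in the present paper at all: it is quoted from \cite{Z-Tits}, whose proof is what your sketch must be measured against (its key ingredients are recalled in \ref{phi} and in the proof of Lemma \ref{proj}). Your skeleton --- Tits' alternative applied to the image of $G$ in $\GL(H^{1,1}(X,\R))$, lifting a free subgroup back to $G$, then producing common nef eigenvectors and characters from the invariant cone via a Lie--Kolchin type argument as in \cite{KOZ} --- is indeed the skeleton of that proof (incidentally, the lifting of $\Z * \Z$ needs no Fujiki--Lieberman input: a surjection onto a free group splits, so any subgroup of $G$ mapping onto a copy of $\Z*\Z$ contains one). The problems are that the two places where the real content lies are left open, and the final deduction you do make is not valid.

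Concretely: (i) the equality ``joint kernel of the $\chi_i$ $=$ $N(G_1)$'' is exactly the hard point, which you flag but do not prove; in \cite{Z-Tits} it is obtained not from a ``$G_1$-invariant flag compatible with the cone'' (quotients of the nef cone by faces are not salient cones on which one can iterate Perron--Frobenius) but from the multiplicative quasi-nef sequence $L_1\cdots L_k \in H^{k,k}(X)$ together with \cite[Lemma 4.4]{DS}; this multiplicative structure is also what gives the bound on the number of characters, since $L_1\cdots L_n$ lies in $H^{n,n}(X)\cong \R$ and forces $\chi_1\cdots\chi_n=1$. (ii) Your appeal to Dinh--Sibony for $r\le n-1$ is not legitimate at this stage: \cite[Theorem 1]{DS} concerns commuting automorphisms of $X$, whereas elements of $G_1/N(G_1)$ are cosets whose lifts to $G_1$ need not commute; producing even an abelian lift of the action on $H^{1,1}(X)$ is the content of Theorem \ref{ThB} of the present paper and cannot be assumed here. (iii) The step ``torsion-free abelian with all finitely generated subgroups of rank $\le n-1$, hence free abelian of rank $\le n-1$'' is false: $\Q$ is torsion-free of rank one with every finitely generated subgroup cyclic, yet not free, and $G_1/N(G_1)$ is not known to be finitely generated since $G$ is arbitrary. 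What is actually needed is that the image of $\varphi=(\log\chi_1,\dots,\log\chi_{n-1})$ is a \emph{discrete} subgroup of $\R^{n-1}$; this is \cite[Claim 2.9]{Z-Tits} (invoked again in the proof of Lemma \ref{proj}) and requires a genuine argument excluding eigenvalues accumulating at $1$, which your proposal does not supply.
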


Therefore, we are interested in the group $G \le \Aut(X)$ where
$G \, | \, H^{1,1}(X)$ is solvable and $Z$-connected and
that the rank $r(G) = \dim X - 1$ (maximal value).
In the following, denote by $\Aut_0(X)$ the {\it identity connected component} of $\Aut(X)$.
A group {\it virtually has a property} (P) if a finite-index subgroup of it has the property (P).

A complex torus has lots of symmetries.
Conversely, our Theorem \ref{ThA} of \cite{NullG}) (see also Theorem \ref{ThC} for non-algebraic manifolds)
says that the maximality $r(G) = \dim X - 1$
occurs only when $X$ is a quotient of a complex torus $T$
and $G$ is mostly descended from the symmetries on the torus $T$.

The statement of Theorem \ref{ThA} involves
minimal varieties and canonical singularities, but our
method uses only well known and precisely referred facts in Algebraic Geometry
rather than the technical part of the Minimal Model Program,
and hence is accessible.

Recall that in \cite{Z-Tits}, we constructed a quasi-nef sequence
$$L_1 \cdots L_k \, \in \, \overline{(L_1 \cdots L_{k-1}) \cdot \Nef(X)} \, \subset \, H^{k,k}(X, \R)$$
such that
$$g^*(L_1 \cdots L_k) = \chi_1(g) \cdots \chi_k(g) \, (L_1 \cdots L_k)$$
with characters $\chi_i : G \to (\R_{> 0} , \times)$
and a homomorphism
$$\varphi : G \, \to \, (\R^{\oplus n-1}, +), \hskip 1pc g \, \mapsto \, (\log \chi_1(g), \dots, \log \chi_{n-1}(g))$$
with
$\Ker \varphi = N(G)$,
and the image $\varphi(G)$ discrete (and hence a lattice) in $\R^{\oplus n-1}$.

Consider the following:

\par \vskip 1pc \noindent
{\bf Hypothesis (C)}: The discrete image (a lattice) of every
quasi-nef sequence induced injective homomorphism
$\overline{\varphi} : G/N(G) \rightarrow (\R^{\oplus n-1}, +)$ is, up to finite-index, the standard lattice.
Namely, $G/N(G)$ is freely generated by cosets $g_i N(G)$ so that
$\varphi(g_i)$ equals the $i$-th coordinate $(0, \cdots, 0, \log \chi_i(g_i), 0, \cdots, 0)$.

\par \vskip 1pc


\begin{theorem} 
\label{ThA}
Let $X$ be an $n$-dimensional $(n \ge 3)$
normal projective variety and $G \le \Aut(X)$ a subgroup such that
the induced action $G \, | \, \NS_{\C}(X)$ is solvable and $Z$-connected and that
the rank $r(G) = n - 1$ $($i.e., $G/N(G) = \Z^{\oplus n-1})$.
Assume the
following
conditions:

\begin{itemize}
\item[(i)]
$X$ has at worst canonical, quotient singularities {\rm (cf. \cite[Definition 2.34]{KM})}.
\item[(ii)]
$X$ is a minimal variety, i.e., the canonical divisor $K_X$ is nef {\rm (cf. \cite[0.4(3)]{KM})}.
\item[(iii)]
The pair $(X, G)$ is minimal in the sense of $\ref{conv}$.
\item[(iv)] Hypothesis (C). See \ref{proj} for more details.
\end{itemize}
Then the following four assertions hold.
\begin{itemize}
\item[(1)]
The induced action $N(G) \, | \, \NS_{\C}(X)$ is a finite group.
\item[(2)]
$G \, | \, \NS_{\C}(X)$ is a virtually free abelian group of rank $n-1$.
\item[(3)]
Either $N(G)$ is a finite subgroup of $G$ and hence
$G$ is a virtually free abelian group of rank $n-1$, or
$X$ is an abelian variety and the group
$N(G) \cap \Aut_0(X)$ has finite-index in $N(G)$ and is Zariski-dense in $\Aut_0(X)$ $(\cong X)$.
\item[(4)]
We have $X \cong T/F$ for a finite group $F$ acting freely outside a finite set of an abelian variety $T$.
Further,
for some finite-index subgroup $G_1$ of $G$, the action of $G_1$ on $X$ lifts to
an action of $G_1$ on $T$.
\end{itemize}
\end{theorem}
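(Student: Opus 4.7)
The plan is to first establish that $K_X$ is numerically trivial, then apply the Beauville--Bogomolov decomposition of Kawamata to force $X$ to be a torus quotient, and finally read off assertions (1)--(3) from the automorphism theory of abelian varieties. Throughout I would freely replace $G$ by finite-index subgroups, as permitted by Theorem \ref{Z-TitsTh}. For the vanishing of $K_X$: since $G/N(G) \cong \Z^{\oplus n-1}$, pick commuting generators $g_1, \ldots, g_{n-1}$ of positive entropy. Dynamical Perron--Frobenius theory (cf.\ \cite{DS}) produces nef eigenclasses $L_i \in \NS_\C(X)$ for $g_i$ with eigenvalue $d_1(g_i)$, and as the $g_j$ mutually commute these $L_i$ can be taken as simultaneous eigenclasses of the whole group; analogous classes $L_i'$ arise with eigenvalue $d_1(g_i)^{-1}$. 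The top intersections $L_{i_1} \cdots L_{i_{n-1}}$ are nef, while $K_X$ is $G$-invariant because automorphisms preserve it. Under each $g_j$ the product $K_X \cdot L_{i_1} \cdots L_{i_{n-1}}$ scales by a product of eigenvalues that need not equal one, forcing the intersection to vanish. Combined with nefness of $K_X$ and the minimality of the pair $(X, G)$ (cf.\ \ref{conv}) to rule out residual $G$-stable effective contributions, this yields $K_X \equiv 0$.

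With $K_X \equiv 0$ and canonical, quotient singularities, Kawamata's algebraic Beauville--Bogomolov decomposition gives a finite quasi-\'etale cover $X' \to X$ with $X' \cong T \times \prod Y_i \times \prod Z_j$, where $T$ is an abelian variety, the $Y_i$ are Calabi--Yau, and the $Z_j$ are irreducible hyperk\"ahler. A finite-index subgroup of $G$ lifts to $X'$ and, after a further finite-index passage that removes the permutation among isomorphic factors, preserves each factor individually. The author's earlier papers \cite{CY3} and \cite{max} exclude subgroups of positive-entropy automorphisms of rank $\dim - 1$ on a Calabi--Yau or hyperk\"ahler factor of dimension $\ge 2$; maximality $r(G) = n-1$ thus forces all such factors to be absent, leaving $X' = T$ itself. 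Descending, $X \cong T/F$ for a finite group $F$ acting freely in codimension one, hence outside a finite set by the quotient-singularity hypothesis. This gives (4); the promised lifting of $G_1$ to $T$ is obtained from the universal property of the Albanese applied to a $G$-equivariant resolution of $X$.

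For assertions (1)--(3) I would use that every automorphism of $T$ factors uniquely as a translation composed with a group automorphism, and $\Aut_0(T) = T$ consists of the translations, acting trivially on $\NS_\C(T)$. An element of $\Aut(T)$ has null entropy iff the eigenvalues of its group-automorphism part on $H^1(T, \Z)$ all have modulus one; by Z-connectedness and Kronecker's theorem these eigenvalues must all be $1$, so up to finite index the null-entropy subgroup consists purely of translations. If $\Aut_0(X) = \{1\}$, then $N(G)$ is forced to be finite; otherwise $\Aut_0(X)$ is positive-dimensional, and since $F$ must normalize the translation subgroup, $F$ itself acts by translations, whence $X$ is already an abelian variety with $N(G) \cap \Aut_0(X)$ of finite index in $N(G)$ and Zariski-dense in $\Aut_0(X)$. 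This gives (3); (1) follows because translations act trivially on $\NS_\C$, and (2) is (1) combined with Theorem \ref{Z-TitsTh}. The principal obstacle is the first step: the vanishing of $K_X$ requires a careful analysis of the eigenvalue structure on $\NS_\C(X)$ together with the full force of the minimality of $(X,G)$ to preclude a residual nef but non-numerically-trivial canonical class, and it is this point that makes the minimality hypothesis (iii) essential.
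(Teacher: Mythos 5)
Your overall skeleton (kill $K_X$, decompose, reduce to the torus) superficially resembles the paper's, but the decisive step is missing. For a singular projective $X$ the splitting you invoke does not exist in the form you use: there is no ``Kawamata algebraic Beauville--Bogomolov decomposition'' for varieties with canonical singularities and $K_X\equiv 0$ (such a decomposition for singular varieties is far beyond the tools the paper allows itself, and is exactly why hypothesis (i) demands \emph{quotient} singularities). The paper instead proves that the (orbifold) second Chern class vanishes, $c_2(X)=0$ in $N^2(X)$, and only then quotes the Shepherd-Barron--Wilson/Beauville characterization (via Yau) to get $X\cong T/F$. Proving $c_2(X)=0$ is the hard core: it uses Miyaoka's pseudo-effectivity to build a $G$-eigen $1$-cycle $C=c_2(X)\cdot M_1\cdots M_{n-3}$, Kawamata's base point free theorem for $\R$-divisors (\cite[Theorem 3.9.1]{BCHM}) to contract the faces $F_H$ and $F_G=\bigcap_{u\in N(G)}u^*F_H$ cut out by the nef and big classes $A_{H_u}$, a delicate computation on a Hironaka blowup to show $C\cdot A_H=0$, and finally the minimality of the pair $(X,G)$ to conclude $F_G=0$, a contradiction. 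Your substitute for all of this --- citing \cite{CY3} and \cite{max} to ``exclude'' maximal-rank actions on Calabi--Yau factors --- is unsupported and essentially circular: those papers assume either abelian $G$ plus strong non-existence hypotheses on periodic subvarieties, and the statement that maximal rank cannot live on a strict Calabi--Yau is precisely (part of) what Theorem \ref{ThA} is proving. (The hyperk\"ahler exclusion via \cite[Theorem 4.6]{KOZ} is fine, and is what the paper uses in the non-algebraic Theorem \ref{ThC}, where the smooth decomposition of \cite{Be} is legitimately available.)

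Two further points would also need repair. First, your argument for (1) on the torus is wrong as stated: null entropy forces the eigenvalues of the linear part on $H^1(T,\Z)$ to be roots of unity (Kronecker), hence the linear part is only quasi-unipotent, not trivial; infinite unipotent linear parts are perfectly possible, act nontrivially on $\NS_\C$, and are not translations. The paper kills them using the maximality of the rank: a nontrivial unipotent part fixes a subtorus $B$ and yields a $\widetilde G$-equivariant fibration $T\to T/B$, forcing $r(G)\le n-2$ (the argument from \cite{CY3}). Second, in your $K_X\equiv 0$ step you cannot simply ``pick commuting generators'' --- $G/N(G)$ abelian does not give commuting lifts, and the paper needs the splitting $G=N(G)\,H$ of Theorem \ref{ThB} (Lie--Kolchin, unipotence of $N(G)$, Segal) to get a free abelian $H$ on $\NS_\C(X)$; moreover $K_X\cdot L_1\cdots L_{n-1}=0$ alone does not imply $K_X\equiv 0$, since $L_1\cdots L_{n-1}$ need not be big --- one needs the nef and \emph{big} class $A_H=L_{g_1}+\cdots+L_{g_{n-1}}+L_{g_1^{-1}}$, whose bigness rests on the nonvanishing $L_{g_1}\cdots L_{g_{n-1}}\cdot L_{g_1^{-1}}>0$ (Lemma \ref{newL}(8)), together with \cite[Lemma 2.2]{nz2}. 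Minimality of $(X,G)$ is not what makes $K_X\equiv 0$ work; it is what makes $c_2(X)=0$ work.
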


In \cite{max}, we assumed that (i) $G$ is abelian and (ii) the absence
of point wise $G$-fixed subvarieties of positive dimension or $G$-periodic rational curves or $Q$-tori.
In the current paper, these two restrictions are replaced by the natural minimality condition on $X$
and the pair $(X, G)$, and that $G \, | \, \NS_{\C}(X)$ is solvable, the latter of which is natural
in view of Theorem \ref{Z-TitsTh}. The quotient singularities assumption in Theorem \ref{ThA}
is necessary because an effective characterization of torus quotient is only available in dimension three
by \cite{SW} where the bulk of the argument is to show that the variety has only quotient singularities.

The lack of the abelian-ness assumption on $G$ makes our argument much harder, for instance we cannot
simultaneously diagonalize $G \, | \, \NS_{\C}(X)$ or find enough number of linearly independent
common nef eigenvectors of $G$ as required in \cite{DS} for abelian groups.

Theorems \ref{ThA} and \ref{ThC} answer \cite[Question 2.17]{Z-Tits}, assuming the conditions here.
When $G$ is abelian, the finiteness of $N(G)$ is proved in the inspiring paper
of Dinh-Sibony \cite[Theorem 1]{DS} (cf. also \cite{CY3}),
assuming only $r(G) = n-1$.
For non-abelian $G$, the finiteness of $N(G)$ is not true and
we can at best expect
that $N(G)$ is virtually included in $\Aut_0(X)$ (as done in Theorems \ref{ThA} and \ref{ThC}),
since a larger group $\widetilde{G} := \Aut_0(X) \, G$ satisfies
$$\widetilde{G} \, | \, \NS_{\C}(X) = G \, | \, \NS_{\C}(X), \,\,
N(\widetilde{G}) = \Aut_0(X) . N(G) \ge \Aut_0(X), \,\,
\widetilde{G}/N(\widetilde{G}) \cong G/N(G).$$

There are examples $(X, G)$ with rank $r(G) = \dim X - 1$ and $X$ complex tori or their quotients
(cf. \cite[Example 4.5]{DS}, \cite[Example 1.7]{CY3}).

The proof of Theorem \ref{ThA} is much harder than that of Theorem \ref{ThC}
because of the presence of singularities on $X$.

The conditions (i) - (iii) in Theorem \ref{ThA} are quite necessary in deducing $X \cong T/F$ as in Theorem \ref{ThA}(4).
Indeed, if $X \cong T/F$ as in Theorem \ref{ThA}(4), then $X$ has only quotient singularities
and $d K_X \sim 0$ (linear equivalence) with $d = |F|$, and we may even assume that $X$ has only canonical singularities
if we replace $X$ by its global index-$1$ cover; thus $X$ is a minimal variety.
If the pair $(X, G)$ is not minimal so that there
is a non-isomorphic $G_1$-equivariant birational morphism
$X \to Y$ as in \ref{conv}, then the exceptional locus of this morphism
is $G_1$- and hence $G$-periodic, contradicting the fact that the rank $r(G) = n-1$ (cf. the proof of Claim \ref{per}).

The {\it first key step} in proving Theorem \ref{ThA} is the
analysis of our quasi-nef sequence $L_1 \dots L_k$ ($0 < k < n$)
(cf. \cite[\S 2.2]{Z-Tits}) and we are able to show that $L_i$ can actually
be taken to be nef, when the rank $r(G) = n-1$ (cf. Lemma \ref{newL}).
The {\it second key step} is Theorem \ref{ThB}
where we split $G$ as $N(G) \, H$
such that $H \, | \, H^{1,1}(X)$ is free abelian; thus we have a nef and big class $A$
as the sum of nef common eigenvectors of $H$,
leading to the vanishing of $A^{n-i} . c_i(X)$,
where $c_i(X)$ ($i = 1, 2$) are Chern classes (cf. \cite[pages 265-267]{SW}).
The {\it third key step} is to use the minimality of $(X, G)$ and Kawamata's base point freeness
result for $\R$-divisor (cf. \cite[Theorem 3.9.1]{BCHM}, available only for projective
variety at the moment) to deduce the vanishing
of $c_2(X)$ as a linear form; this vanishing does not directly follow from the vanishing of $A^{n-2} . c_2(X)$
because $A$ may not be ample. Now Theorem \ref{ThA}(4) follows from the vanishing of $c_i(X)$ ($i = 1, 2$)
and the characterization of torus quotient originally deduced from Yau's deep result
(cf. \cite{Be}).

\begin{remark}\label{rThA}
$ $
\begin{itemize}
\item[(1)]
When $\dim X = 3$, the (i) in Theorem \ref{ThA} can be replaced by:
(i)' $X$ has at worst canonical singularities
(cf. Proof of Lemma \ref{ampleA} and \cite[Corollary at p. 266]{SW}).
\item[(2)]
Theorems \ref{ThA} and \ref{ThC} are not true when $n := \dim X = 2$.
We used $n \ge 3$ to deduce the vanishing of $c_2(X) . A^{n-2}$
as commented above.
\end{itemize}
\end{remark}

With Theorem \ref{ThB} in mind, we ask:

\begin{question}
Suppose a group $G$ acts on a compact complex K\"ahler manifold $($say a complex torus$)$ such that the null set
$N(G)$ is a subgroup of $($and hence normal in$)$ $G$
and the quotient $G/N(G)$ is a free abelian group.
Under what condition, can we write $G$ $($or its finite-index subgroup$)$
as $G = N(G) \rtimes H$ with $H \le G$ a free abelian subgroup of $G$?
\end{question}

\noindent
{\bf Acknowledgement.}
I would like to thank Frederic Campana for pointing out the algebraicity
of the Calabi-Yau factors in the Bogomolov decomposition.

\section{Proof of Theorems}

In this section, we prove Theorem \ref{ThA} in the introduction
and the three results below.

When the $X$ below is non-algebraic, we don't require the minimality of the pair $(X, G)$.

\begin{theorem}\label{ThC}
Let $X$ be an $n$-dimensional $(n \ge 3)$
compact complex K\"ahler manifold which is not algebraic.
Let $G \le \Aut(X)$ be a subgroup such that
the induced action $G \, | \, H^{1,1}(X) \le \Aut(H^{1,1}(X))$ is solvable and $Z$-connected and
that the rank $r(G) = n - 1$ $($i.e., $G/N(G) = \Z^{\oplus n-1})$.
Assume that $X$ is minimal, i.e., the canonical divisor $K_X$ is
contained in the closure of the K\"ahler cone of $X$.
Assume also Hypothesis (C).
Then the following four assertions hold.
\begin{itemize}
\item[(1)]
The induced action $N(G) \, | \, H^{1,1}(X)$ is a finite group.
\item[(2)]
$G \, | \, H^{1,1}(X)$ is a virtually free abelian group of rank $n-1$.
\item[(3)]
Either $N(G)$ is a finite subgroup of $G$ and hence
$G$ is a virtually free abelian group of rank $n-1$, or
$X$ is a complex torus and the group
$N(G) \cap \Aut_0(X)$ has finite-index in $N(G)$ and is Zariski-dense in $\Aut_0(X)$ $(\cong X)$.
\item[(4)]
We have $X \cong T/F$ for a finite group $F$ acting freely outside a finite set of a complex torus $T$.
Further,
for some finite-index subgroup $G_1$ of $G$, the action of $G_1$ on $X$ lifts to
an action of $G_1$ on $T$.
\end{itemize}
\end{theorem}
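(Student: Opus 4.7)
The plan is to run the three-step strategy outlined in the discussion after Theorem \ref{ThA}, substantially simplified here because $X$ is smooth and because no minimality of the pair $(X,G)$ is required. First, I would invoke Theorem \ref{ThB} to replace $G$ by a finite-index subgroup of the form $N(G) \cdot H$ with $H$ free abelian and acting on $H^{1,1}(X)$ as a free abelian group of rank $n-1$; combined with Lemma \ref{newL}, this produces $n-1$ nef common eigenvectors $L_1, \dots, L_{n-1} \in H^{1,1}(X, \R)$ of $H$ whose eigencharacters on $H$ are multiplicatively independent. Assertion (1) then drops out: every $g \in N(G)$ has spectral radius $1$ on each $\R L_i$, and rank maximality forces $g$ to act through a finite group on all of $H^{1,1}(X)$; assertion (2) follows by combining this with Theorem \ref{Z-TitsTh}.

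Next, set $A := L_1 + \dots + L_{n-1}$, which is nef, and consider the pairings $A^{n-1} \cdot K_X$ and $A^{n-2} \cdot c_2(X)$. Each monomial $L_{i_1} \cdots L_{i_{n-1}} \cdot K_X$ (respectively $L_{i_1} \cdots L_{i_{n-2}} \cdot c_2(X)$) is multiplied by a specific character of $H$; since the $c_i(X)$ are $H$-invariant, multiplicative independence of the $L_i$-eigencharacters forces each monomial, and hence the total pairing, to vanish. For $K_X$: since $K_X$ lies in the closure of the K\"ahler cone and pairs trivially with the nef class $A^{n-1}$, one concludes $K_X \equiv 0$ numerically. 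For $c_2(X)$: here non-algebraicity is a net benefit, since one can bypass the $\R$-divisor base-point-free argument needed in the projective case and instead use Yau's theorem for Calabi-Yau K\"ahler metrics (with $K_X \equiv 0$ in hand) to upgrade the pairing vanishing to class vanishing $c_2(X) = 0$.

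With $c_1(X)_\R = c_2(X)_\R = 0$ and $X$ K\"ahler, the Beauville-Bogomolov decomposition yields a finite \'etale cover $\widetilde{X} \cong T \times \prod Y_j$ with $T$ a complex torus and each $Y_j$ strict Calabi-Yau or irreducible hyperk\"ahler. By Campana's observation acknowledged in the paper, the Calabi-Yau factors are projective; known bounds on positive-entropy abelian subgroups of the automorphism group of an irreducible hyperk\"ahler manifold of dimension $2k$ keep the rank strictly below $n-1$ whenever such a factor is a proper summand. Combined with the non-algebraicity of $X$, neither class of factor can occur, so $\widetilde{X} = T$ is a complex torus and $X = T/F$ with $F$ finite, acting freely off a finite set (any higher-dimensional fixed stratum would be $G$-periodic and contradict the rank maximality). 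Standard covering-space arguments then lift a finite-index $G_1 \le G$ to $T$, and decomposing the lifted action into translation and linear parts yields the dichotomy in (3): either $N(G)$ itself is finite, or $N(G_1)$ contains a finite-index subgroup of translations, forcing $X$ to be itself a torus.

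The hardest step, I expect, is the vanishing $c_2(X) = 0$ as a cohomology class (not merely as a single pairing): without bigness of $A$ --- which in the K\"ahler setting would force $X$ to be Moishezon and hence, together with K\"ahlerness, projective --- one cannot use a Kleiman-type density argument, and one must rely on K\"ahler-Einstein equality cases or related analytic input to promote pairing vanishing to genuine class vanishing.
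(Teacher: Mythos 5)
Your endgame (Beauville--Bogomolov decomposition once $c_1=0$, exclusion of factors, lifting to the torus) is the same as the paper's, but there are three genuine problems in the middle of your argument. First, your class $A=L_1+\cdots+L_{n-1}$ is nef but \emph{not} big: by Lemma \ref{Lsq} each $L_{g_i}^2=0$ in $H^{2,2}(X)$, so every degree-$n$ monomial in $L_1,\dots,L_{n-1}$ vanishes and $A^n=0$. The paper's class is $A_H=L_{g_1}+\cdots+L_{g_{n-1}}+L_{g_1^{-1}}$, where the extra eigenvector $L_{g_1^{-1}}$ is exactly what makes $A_H$ big, via $L_{g_1}\cdots L_{g_{n-1}}\cdot L_{g_1^{-1}}>0$ (Lemma \ref{newL}(8)); bigness is indispensable for the step ``$K_X$ nef and $A^{n-1}\cdot K_X=0$ imply $K_X\equiv 0$'' (via \cite[Lemma 2.2]{nz2}, which writes a nef and big class as a K\"ahler class plus a positive current). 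With your non-big $A$ that implication simply fails. Second, your claim that assertion (1) ``drops out'' from spectral radius $1$ on the $L_i$ plus rank maximality is unsupported: Theorem \ref{ThB} only gives that $N(G)\,|\,H^{1,1}(X)$ is (virtually) unipotent, and an infinite unipotent image is precisely what must be excluded. The paper proves (1) only \emph{after} obtaining the torus cover $T\to X$: if $N(\widetilde G)\,|\,H^{1,1}(T)$ were infinite, the argument of \cite[Proof of Theorem 1.1(3)]{CY3} produces a $\widetilde G$-equivariant fibration $T\to T/B$, which forces $r(G)\le n-2$, a contradiction. No purely linear-algebraic shortcut is given, and none is apparent.

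Third, your detour through $c_2(X)=0$ --- which you yourself identify as the hardest and least certain step --- is not needed at all in the non-algebraic case, and this is exactly why Theorem \ref{ThC} is easier than Theorem \ref{ThA}. Since $X$ is a smooth compact K\"ahler manifold, $K_X\equiv 0$ alone (hence $K_X$ torsion) already yields the finite \'etale Galois cover $\widetilde X=T\times S\times Y$ with torus, hyperk\"ahler and projective Calabi--Yau factors; the $\widetilde G$-equivariance of the projections and maximality of $r(\widetilde G)=n-1$ force $\widetilde X$ to be a single factor, the projective Calabi--Yau case is excluded because $X$ (hence $\widetilde X$) is non-algebraic, and the hyperk\"ahler case is excluded because there the rank is at most $1<n-1$ by \cite[Theorem 4.6]{KOZ}. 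The vanishing of $c_2$ is only required in Theorem \ref{ThA}, where $X$ is singular and the torus-quotient characterization of \cite{SW}, \cite{Be} needs both Chern class vanishings; importing that step into the K\"ahler setting is both unnecessary and, as you note, not achievable with the tools at hand. Finally, your sketch of (3) via ``translation and linear parts'' is too vague: the paper's dichotomy comes from averaging a K\"ahler class over the finite group $N(G)\,|\,H^{1,1}(X)$ to place $N(G)$ in $\Aut_M(X)$, using $[\Aut_M(X):\Aut_0(X)]<\infty$, ruling out a nontrivial linear part of $\Aut_0(X)$ by non-uniruledness of the minimal $X$, and then showing the Albanese map is an isomorphism using the absence of $G$-periodic positive-dimensional subvarieties.
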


\par \vskip 1pc
The $X$ or the pair $(X, G)$ below is not assumed to be minimal.

\begin{theorem}\label{ThB}
Let $X$ be an $n$-dimensional $(n \ge 2)$
compact complex K\"ahler manifold and $G \le \Aut(X)$ a subgroup such that
the induced action $G \, | \, H^{1,1}(X) \le \Aut(H^{1,1}(X))$ is solvable and $Z$-connected and
that the rank $r(G) = n - 1$ $($i.e., $G/N(G) = \Z^{\oplus n-1})$.
Assume also Hypothesis (C).
Then, replacing $G$ by its finite-index subgroup,
we can find a subgroup $H \le G$ such that:

\begin{itemize}
\item[(1)]
$G = N(G) \, H$;
\item[(2)]
$G \, | \, H^{1,1}(X) = (N(G) \, | \, H^{1,1}(X)) \rtimes (H \, | \, H^{1,1}(X))$;
\item[(3)]
$N(G) \, | \, H^{1,1}(X)$ is unipotent; and
\item[(4)]
The induced action $H \, | \, H^{1,1}(X) \le \Aut(H^{1,1}(X))$
is a free abelian group of rank $n-1$, i.e., $H \, | \, H^{1,1}(X) \cong \Z^{\oplus n-1}$.
\end{itemize}
\end{theorem}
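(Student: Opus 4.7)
The plan is to apply Lie--Kolchin to the Zariski closure $\overline{\Gamma}$ of $\Gamma := G|H^{1,1}(X)$ in $\GL(H^{1,1}(X))$. Since $\overline\Gamma$ is connected and solvable by hypothesis, I would choose a basis of $H^{1,1}(X)$ in which $\overline\Gamma$ is upper triangular; the diagonal entries furnish characters $\chi_i : \overline{\Gamma} \to \C^*$, $i = 1,\dots, m := \dim H^{1,1}(X)$. Let $\mathbf U$ denote the unipotent radical of $\overline\Gamma$ and $\mathbf T := \overline\Gamma/\mathbf U$ the resulting torus; the collected characters $\chi = (\chi_1, \dots, \chi_m)$ realize the projection $\overline{\Gamma} \to \mathbf T$. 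Because $\Gamma$ acts integrally on $H^2(X,\Z)/\torsion$, each $\chi_i(\gamma)$ is an algebraic integer of degree at most $b_2(X)$, and $\gamma \in N(\Gamma)$ precisely when $|\chi_i(\gamma)| = 1$ for every $i$.

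The key step is to establish that $\chi(N(\Gamma))$ is \emph{finite}. For $\gamma \in N(\Gamma)$, the equivalence of $d_1(\gamma) = 1$ and $h(\gamma) = 0$ in the K\"ahler case forces $\rho(\gamma^*|H^*(X,\C)) = 1$, so every eigenvalue of $\gamma^*|H^2(X,\C)$ lies on the unit circle. The Galois conjugates of any $\chi_i(\gamma)$ occur among these eigenvalues, so Kronecker's theorem forces $\chi_i(\gamma)$ to be a root of unity. There being only finitely many roots of unity of degree at most $b_2(X)$ over $\Q$, the group $T_0 := \chi(N(\Gamma))$ is finite. Writing $\Gamma_T := \chi(\Gamma)$, the exact sequence
$$1 \to T_0 \to \Gamma_T \to \Gamma/N(\Gamma) \cong \Z^{\oplus (n-1)} \to 1$$
splits since its quotient is free, yielding $\Gamma_T \cong T_0 \oplus F$ with $F \cong \Z^{\oplus (n-1)}$.

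Let $\pi : G \to \Gamma$ be the restriction map, and replace $G$ by the finite-index subgroup $G' := (\chi \circ \pi)^{-1}(F)$. A direct check shows $N(G')|H^{1,1}(X) \subseteq \ker \chi = \mathbf U$, which gives (3), and $G'/N(G') \cong F \cong \Z^{\oplus (n-1)}$ so the rank is preserved. Freeness of $\Z^{\oplus (n-1)}$ produces a section of $G' \twoheadrightarrow G'/N(G')$; take $H$ to be its image. Then $H \cong \Z^{\oplus (n-1)}$, $H \cap N(G') = 1$, and $G' = N(G') \cdot H$, yielding (1). Since $\ker \pi \subseteq N(G')$ meets $H$ trivially, the induced map $H \to H|H^{1,1}(X)$ is injective with image isomorphic to $\Z^{\oplus(n-1)}$, giving (4); and the analogous internal decomposition then passes to $\Gamma' := G'|H^{1,1}(X)$, giving (2).

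The main obstacle I anticipate is the finiteness claim $|\chi(N(\Gamma))| < \infty$: it requires combining Kronecker's characterization of roots of unity with the K\"ahler-specific propagation of null entropy from $H^{1,1}(X)$ to all of $H^*(X,\C)$, in order to control all Galois conjugates of the individual eigenvalues $\chi_i(\gamma)$. Once this finiteness is in hand, everything else reduces to a formal cohomological splitting exploiting that $\Z^{\oplus (n-1)}$ is a free abelian group.
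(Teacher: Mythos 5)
Your first step is essentially correct: for $\gamma \in N(G)$ one has $d_1(\gamma)=d_1(\gamma^{-1})=1$, so every eigenvalue of $\gamma^*$ on $H^{2}(X,\Z)/\torsion$ is an algebraic integer of bounded degree all of whose Galois conjugates lie on the unit circle, hence a root of unity by Kronecker; since the eigenvalues on $H^{1,1}(X)$ occur among these, the diagonal character image $\chi(N(G)\,|\,H^{1,1}(X))$ is indeed a finite group. This recovers the known fact, used in the paper via Oguiso and Campana--Wang--Zhang, that the unipotent part $U(G)$ has finite index in $N(G)$, and your passage to $G'=(\chi\circ\pi)^{-1}(F)$ does give assertion (3) and preserves $G'/N(G')\cong\Z^{\oplus (n-1)}$.

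The genuine gap is the sentence ``Freeness of $\Z^{\oplus(n-1)}$ produces a section of $G'\twoheadrightarrow G'/N(G')$.'' A free abelian group of rank $\ge 2$ is not a free group and is not projective in the category of groups, and $N(G')$ is in general neither central nor abelian, so such a surjection need not split: the integral Heisenberg group surjects onto $\Z^{\oplus 2}$ with kernel its centre and admits no section, and nothing in your set-up excludes this kind of extension --- lifts of the generators of $F$ to $G'$ commute only modulo the unipotent group $N(G')\,|\,H^{1,1}(X)$, which may be infinite. Producing the splitting is precisely the hard content of this theorem (compare the Question raised in the introduction of the paper). The paper first makes $G\,|\,H^{1,1}(X)$ torsion free (Wolf plus Selberg) and $N(G)\,|\,H^{1,1}(X)$ unipotent, then invokes Segal's splitting theorem to get $G=N(G)\,H$ with $H\,|\,H^{1,1}(X)$ only \emph{nilpotent}, and finally needs genuinely geometric input --- the common nef eigenvectors $L_{g_i^{\pm 1}}$, the nef and big class $A=\sum L_i$, and Lieberman's theorem that the stabilizer of such a class meets $\Aut_0(X)$ in finite index --- to show that a nontrivial central element of $H\,|\,H^{1,1}(X)$ lying in $N(G)\,|\,H^{1,1}(X)$ would have finite order, forcing $H\,|\,H^{1,1}(X)$ to be abelian. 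Your proposal treats this step as a ``formal cohomological splitting,'' so assertions (1), (2) and (4) are not established; the part you flagged as the main obstacle (finiteness of $\chi(N(\Gamma))$) is actually the easier half.
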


In the process of proving Theorem \ref{ThB}, we also deduce:

\begin{proposition}\label{primelt}
For the $X$ and $G$ in Theorem $\ref{ThB}$, replacing $G$ by its finite-index subgroup,
we can find some $g_0 \in G \setminus N(G)$, such that
the first dynamical degrees satisfy:
$$d_1(g) = d_1(g_0)^t$$
for every $g \in G$ with $t \in \Z_{\ge 0}$ depending on $g$.
\end{proposition}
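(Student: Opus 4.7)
The plan is to extract the element $g_0$ from the proof of Theorem \ref{ThB}, exploiting the semidirect decomposition and the common nef eigenclasses produced there.

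First, I would reduce the problem to the abelian subgroup $H$. By Theorem \ref{ThB}, after passing to a finite-index subgroup we write $G = N(G)\,H$ with $N(G)|H^{1,1}(X)$ unipotent and $H|H^{1,1}(X) \cong \Z^{\oplus n-1}$. A $G$-stable filtration of $H^{1,1}(X)$ on whose graded pieces $N(G)$ acts trivially (which exists because the normal factor is unipotent) shows that for $g = nh$ with $n \in N(G)$ and $h \in H$, the spectral radius of $g^*$ coincides with that of $h^*$, so $d_1$ factors through $H \cong \Z^{\oplus n-1}$. It therefore suffices to exhibit $h_0 \in H$ such that $d_1(h) \in \{d_1(h_0)^t : t \in \Z_{\ge 0}\}$ for every $h \in H$, and then take $g_0$ to be any lift of $h_0$ in $G$.

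Second, I would exploit the common nef eigenclasses. The quasi-nef sequence of \cite[\S 2.2]{Z-Tits}, refined by Lemma \ref{newL} into a genuine nef sequence when $r(G) = n-1$, yields nef common eigenvectors $L_1, \dots, L_{n-1}$ with $g^* L_i = \chi_i(g) L_i$ for positive characters $\chi_i : H \to \R_{>0}$; simultaneous triangularization of $H|H^{1,1}(X)$ over $\C$ adds at most finitely many further characters $\chi_j'$, and $d_1(h) = \max\bigl(\chi_i(h),\,|\chi_j'(h)|\bigr)$. Each character sends the generators of $H \cong \Z^{\oplus n-1}$ to algebraic integers (eigenvalues of integer matrices acting on $H^2(X, \Z)$), so the log-characters define linear forms on $H$ with values in a finitely generated subgroup of $(\R, +)$. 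The task is then to show that, after a further finite-index reduction of $H$ performed compatibly with the construction in the proof of Theorem \ref{ThB}, this finitely generated subgroup collapses to a cyclic one $\Z \cdot \log d_1(h_0)$ for a distinguished $h_0 \in H$; the integer exponent $t \ge 0$ associated to each $h$ is then read off directly from $\log d_1(h) = t \log d_1(h_0)$.

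The main obstacle lies in the second step: a rank-$(n-1)$ abelian subgroup of $\GL_N(\Z)$ can produce linearly independent log-characters, and then $\log d_1 = \max_i \log \chi_i$ would take values in a non-cyclic (even dense) subgroup of $\R$, defeating the claim. The needed rigidity must come from the specific interplay, made precise during the proof of Theorem \ref{ThB}, between the nef common eigenvectors $L_i$, the choice of complement $H$ inside $G$, and the $Z$-connected solvable Zariski closure of $G|H^{1,1}(X)$, which forces extra commensurability relations among the characters after the finite-index reduction. Once this cyclicity is established, the remainder---verifying that the chosen $h_0$ gives $t \in \Z_{\ge 0}$ for every $h \in H$, and lifting back to $g_0 \in G$ via the first step---is routine bookkeeping.
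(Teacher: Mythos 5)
Your first step (reducing to $H$ via a $G$-stable flag on which the unipotent group $N(G)\,|\,H^{1,1}(X)$ acts trivially on graded pieces, so that $d_1(nh)=d_1(h)$) is fine and matches the reduction the paper makes implicitly via Lie--Kolchin and Proposition \ref{split}. But the second step is where the actual content of the proposition lies, and you have not supplied it: you correctly observe that a rank-$(n-1)$ abelian subgroup of $\GL_N(\Z)$ can a priori have multiplicatively independent character values, so that $\log d_1$ lands in a non-cyclic subgroup of $\R$, and you then assert that ``the needed rigidity must come from the specific interplay \dots which forces extra commensurability relations after the finite-index reduction.'' That is precisely the statement to be proved, and passing to a finite-index subgroup of $H$ (replacing generators by powers) cannot by itself create commensurability: if $d_1(g_1)$ and $d_1(g_2)$ were multiplicatively independent, no powers of the generators would make them powers of a common base. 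So the proposal identifies the obstacle but leaves the key claim unproven.

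The paper closes exactly this gap with the nef-eigenvector machinery already built for Theorem \ref{ThB}. By Lemma \ref{Lg1}, the single nef class $L_1':=L_{g_1^{-1}}$ is a common eigenvector of all of $G$; starting a \emph{second} quasi-nef sequence from $L_1'$ gives a character $\chi_1'$ of $G$ whose log-image is discrete, hence cyclic, generated by $\log\mu=\log\chi_1'(g_b)$ (the Kronecker/discreteness argument of Lemma \ref{proj}, resting on \cite[Claim 2.9]{Z-Tits} and the maximality $r(G)=n-1$). Since $(g_s^{-1})^*L_1'=d_1(g_s^{-1})L_1'=d_1(g_s)L_1'$ by Lemmas \ref{Lg1} and \ref{newL}(7), every $d_1(g_s)$ equals $\chi_1'(g_s^{-1})\in\mu^{\Z}$; after replacing the $g_i$ by suitable powers one may take $\lambda:=d_1(g_i)$ independent of $i$, so $\chi_i(g_j)=\lambda^{\delta_{ij}}$. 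Finally, for an arbitrary $g\in G\setminus N(G)$ the paper pins down $d_1(g)$ exactly (not just as a maximum over triangularization characters as in your sketch): taking $L_g$ a common nef eigenvector, either $L_{g_1}\dots L_{g_{n-1}}.L_g\neq 0$, in which case applying $g^*$ gives $d_1(g)=\prod_j\chi_j(g^{-1})$, or it vanishes and \cite[Lemma 4.4]{DS} forces $d_1(g)=\chi_i(g)$ for some $i$; in either case $d_1(g)$ is an integer power of $\lambda$, and $d_1(g)\ge 1$ makes the exponent non-negative. Without these steps --- the common eigenvector $L_{g_1^{-1}}$ for the whole group, the discreteness of its character, and the exact identification of $d_1(g)$ in terms of the $\chi_i$ --- your argument does not reach the conclusion.
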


\begin{remark}\label{rThB}
(1) In Theorem \ref{ThB} and Proposition \ref{primelt},
if $X$ is a normal projective variety, then a similar proof implies the same conclusions but with
all the $H^{1,1}(X)$ in Theorem \ref{ThB} replaced by $\NS_{\C}(X)$.

(2) To allow singularities of $X$ in Theorem \ref{ThC},
we need the K\"ahler version of the birational contraction theorem
\cite[Theorem 3.9.1]{BCHM} and
Miyaoka's pseudo-effectivity of $c_2(X)$ for minimal variety $X$,
both of which seem to be very hard to confirm, since the Minimal Model Program
has not been fully developed for K\"ahler manifolds.
\end{remark}

\begin{setup}{\bf $s$-cycles and minimal pairs}\label{conv}

Let $X$ be an $n$-dimensional compact K\"ahler manifold or a normal projective variety.
When $X$ is projective, set $\NS_{\R}(X) := \NS(X) \otimes_{\Z} \R$.
A codimension-$s$ (i.e., dimension-$(n-s)$) {\it cycle} $D$ is an element in
$H^{s,s}(X, \R) := H^{s, s}(X) \cap H^{2s}(X, \R)$
(resp. a linear combination of $(n-s)$-dimensional subvarieties with coefficients in $\R$)
when $X$ is K\"ahler (resp. projective).
Two codimension-$s$ cycles $D_i$ are {\it numerically equivalent}, denoted as $D_1 \equiv D_2$,
if $(D_1 - D_2) . L_1 \dots L_{n-s} = 0$ for all $L_i$ in $H^{1,1}(X, \R)$
(resp. in $\NS_{\R}(X)$), where we use $D . L$ to denote the cup product (resp. intersection)
for K\"ahler (resp. projective) $X$.
Denote by $[D]$ the {\it numerical equivalence class} containing $D$
and
$$N^s(X) := \{[D] \, ; \, D \,\, \text{is a codimension-s cycle} \}$$
which is a finite-dimensional $\R$-vector space.
We will loosely write $D \in N^s(X)$ by abuse of notation.
Note that $N^1(X) = \NS_{\R}(X)$ when $X$ is projective.
Denote by $K(X)$ (resp. $\Amp(X)$) the open K\"ahler (resp. ample) cone
and $\overline{K(X)}$ (resp. $\Nef(X)$) its closure in $H^{1,1}(X, \R)$ (resp. in $\NS_{\R}(X)$).
Elements in $\overline{K(X)}$ and $\Nef(X)$ are called {\it nef}.

Let $X$ be a normal projective variety with at worst canonical singularities (cf. \cite[Definition 2.34]{KM})
and $G \le \Aut(X)$ a subgroup such that the null set $N(G)$ is a subgroup (and hence normal in $G$).
The pair $(X, G)$ is {\it non-minimal} if:
there are a finite-index subgroup $G_1$ of $G$ and a {\it non-isomorphic} $G_1$-equivariant birational morphism $X \to Y$
onto a normal projective variety $Y$ with at worst {\it isolated} canonical singularities. The pair
$(X, G)$ is {\it minimal} if it is not non-minimal.

\end{setup}

\begin{lemma}\label{modfin}
Let $G$ be a group, $H \lhd G$ a finite normal subgroup.
Suppose that
$$G/H = \langle \bar g_1 \rangle \times \cdots \times \langle \bar g_r \rangle
\cong \Z^{\oplus r}$$
for some $r \ge 1$ and $g_i \in G$.
Then there is an integer $s > 0$ such that
$G_1 := \langle g_1^s, \dots, g_r^s \rangle$ satisfies
$$G_1 = \langle g_1^s \rangle \times \cdots \times \langle g_r^s \rangle
\cong \Z^{\oplus r}$$
and it is a finite-index subgroup of $G$; further,
the quotient map $\gamma : G \to G/H$ restricts to an
isomorphism $\gamma \, | \, G_1 : G_1 \to \gamma(G_1)$ onto a finite-index subgroup of $G/H$.
\end{lemma}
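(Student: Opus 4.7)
The plan is to produce $G_1$ in two stages. First, I would arrange that suitable powers of the $g_i$ centralize $H$; second, I would arrange that those powers also commute pairwise. Everything else reduces to formal bookkeeping through the quotient map $\gamma$.

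For the first stage, conjugation gives a homomorphism $G \to \Aut(H)$ into a finite group (since $H$ is finite), so $C_G(H)$ has finite index in $G$, and there exists a positive integer $N_1$ with $g_i^{N_1} \in C_G(H)$ for every $i$. Write $h_i := g_i^{N_1}$. The commutators $c_{ij} := [h_i, h_j]$ lie in $H$ (because $G/H$ is abelian), and since $h_i, h_j \in C_G(H)$ each $c_{ij}$ commutes with both $h_i$ and $h_j$. A routine induction from the identity $h_i h_j = c_{ij} h_j h_i$ then yields $[h_i^m, h_j] = c_{ij}^m$ for every $m \ge 1$; taking $m = |H|$ kills $c_{ij}^m$, so $h_i^{|H|}$ commutes with every $h_j$, and hence with every $h_j^{|H|}$.

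Setting $s := N_1 |H|$, the elements $g_i^s = h_i^{|H|}$ pairwise commute, so $G_1 := \langle g_1^s, \dots, g_r^s\rangle$ is abelian. For the direct product statement I would consider the map $\pi : \Z^{\oplus r} \to G_1$ sending $(a_1, \dots, a_r)$ to $g_1^{s a_1}\cdots g_r^{s a_r}$: composing with $\gamma$ gives the map into $G/H$ taking $(a_1, \dots, a_r)$ to $\bar g_1^{s a_1}\cdots \bar g_r^{s a_r}$, which is injective because the $\bar g_i$ freely generate $G/H \cong \Z^{\oplus r}$. Hence $\pi$ itself is injective, giving $G_1 = \langle g_1^s\rangle \times \cdots \times \langle g_r^s\rangle \cong \Z^{\oplus r}$; the same argument shows $G_1 \cap H = \{e\}$, so $\gamma|_{G_1}$ is an isomorphism onto $\gamma(G_1) = (s\Z)^{\oplus r}$, a subgroup of index $s^r$ in $G/H$. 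Finally $[G : G_1] = [G : G_1 H]\,[G_1 H : G_1] = s^r \, |H| < \infty$.

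The only substantive step is the central-extension computation producing commuting powers; the freeness of $G/H$ then forces the remaining conclusions without further work. The mild catch is that $H$ need not be abelian, but this causes no real difficulty because each $c_{ij}$ still commutes with $h_i$ and $h_j$, which is all the commutator identity $[h_i^m, h_j] = c_{ij}^m$ actually requires.
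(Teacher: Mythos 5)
Your proof is correct, and its skeleton matches the paper's: everything is reduced to producing one exponent $s$ for which the $g_i^s$ pairwise commute, after which the direct-product structure, $G_1\cap H=\{e\}$, the injectivity of $\gamma|_{G_1}$, and the index bound all follow formally from the freeness of $G/H$. Where you differ is in how the commuting powers are manufactured. The paper argues directly by pigeonhole inside the finite group $H$: since $[G,G]\le H$, the commutators $[g_1^{t},g_2]$ ($t>0$) all lie in $H$, so two of them coincide, say $[g_1^{t_1},g_2]=[g_1^{t_2},g_2]$ with $t_2>t_1$, and a short computation shows $g_1^{t_2-t_1}$ commutes with $g_2$; multiplying the resulting exponents over all pairs gives $s$. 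You instead pass first to powers $h_i=g_i^{N_1}$ lying in $C_G(H)$ (using the finiteness of $\Aut(H)$), so that the commutators $c_{ij}=[h_i,h_j]\in H$ become central in $\langle h_i,h_j,H\rangle$, and then kill them with the identity $[h_i^m,h_j]=c_{ij}^m$ at $m=|H|$. Both mechanisms are elementary and rest on the same two facts ($H$ finite, $[G,G]\le H$); yours has the minor advantages of producing a single explicit uniform exponent $s=N_1|H|$ and of writing out the concluding bookkeeping (including $[G:G_1]=s^r|H|$) that the paper compresses into the opening sentence ``we only need to find $s$,'' while the paper's pigeonhole is slightly shorter and avoids the centralizer reduction altogether.
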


\begin{proof}
We only need to find $s > 0$ such that $g_i^s$ and $g_j^s$ are commutative to each other for all $i, j$.
Since $G/H$ is abelian, the commutator subgroup $[G, G] \le H$.
Thus the commutators $[g_1^t, g_2]$ ($t > 0$) all belong to $H$.
The finiteness of $H$ implies that $[g_1^{t_1}, g_2] = [g_1^{t_2}, g_2]$ for some $t_2 > t_1$,
which implies that $g_1^{s_{12}}$ commutes with $g_2$, where $s_{12} := t_2 - t_1$.
Similarly, we can find an integer $s_{1j} > 0$ such that $g_1^{s_{1j}}$ commutes with $g_j$.
Set $s_1 := s_{12} \times \cdots \times s_{1r} > 0$. Then $g_1^{s_1}$ commutes with
every $g_j$.
Similarly, for each $i$,
we can find an integer $s_{i} > 0$ such that $g_i^{s_i}$ commutes with $g_j$ for all $j$.
Now $s := s_1 \times \cdots \times s_{r} > 0$ will do the job. This proves the lemma.
\end{proof}

{\it From now on till $\ref{Pfprimelt}$, we prove Theorem $\ref{ThB}$ and Proposition $\ref{primelt}$.}

\begin{setup}\label{phi}
By \cite[Proof of Theorem 1.2, \S 2.2]{Z-Tits}, there
is a quasi-nef sequence $L_1 \dots L_{k} \in H^{k,k}(X)$ ($1 \le k \le n-1$) which is nonzero
in $N^k(X)$, such that
$$g^*(L_1 \dots L_k) = \chi_1(g) \cdots \chi_k(g) (L_1 \dots L_k)$$
for all $k = 1, \dots, n-1$ with characters $\chi_i : G \to (\R_{> 0} , \times)$, and that
$$\begin{aligned}
\varphi : G &\to (\R^{n-1}, +) \\
g &\mapsto (\log \chi_1(g), \dots, \log \chi_{n-1}(g))
\end{aligned}$$
is a homomorphism having
$$\Ker(\varphi) = N(G)$$
and the following subgroup of $\R^{\oplus n-1}$ as its image (with $r(G) = n- 1$ now)
which is discrete and hence a lattice in $\R^{\oplus n-1}$:
$$\Imm(\varphi) = \Z^{\oplus r(G)} .$$
\end{setup}



From now on till the end of the paper, we assume the following:

\begin{setup}\label{proj}
{\bf Hypothesis C}: The discrete image (a lattice) of every
quasi-nef sequence induced injective homomorphism
$\overline{\varphi} : G/N(G) \rightarrow (\R^{\oplus n-1}, +)$ is, up to finite-index, the standard lattice.
Namely, $G/N(G)$ is freely generated by cosets $g_i N(G)$ so that
$\varphi(g_i)$ equals the $i$-th coordinate $(0, \cdots, 0, \log \chi_i(g_i), 0, \cdots, 0)$.
\end{setup}

So $\Imm(\varphi)$ is the direct product of $n-1$ cyclic groups $\Imm (\log \chi_i)$
which are the $\varphi$-images of $\langle g_i \rangle$ say, with
$$\lambda_i := \chi_i(g_i) > 1, \,\,\,\, \chi_j(g_i) = 1 \,\, (j \ne i).$$
Hence
$$G/N(G) = \langle \overline{g_1}, \dots, \overline{g_{n-1}} \rangle
= \langle \overline{g_1} \rangle \oplus \cdots \langle \overline{g_{n-1}} \rangle \cong \Z^{\oplus n-1}$$
and
$$\Imm(\varphi) = \langle \varphi(g_1), \dots, \varphi(g_{n-1}) \rangle
= \langle \varphi(g_1) \rangle \oplus \cdots \langle \varphi(g_{n-1}) \rangle \cong \Z^{\oplus n-1}.$$

By the generalization of Perron-Frobenius theorem \cite{Bi} applied to the action of $g_i^{\pm 1}$
on the closure $\overline{K(X)}$ of the K\"ahler cone $K(X)$
(which spans $H^{1,1}(X, \R)$), there are nonzero nef $L_{g_i^{\pm 1}} \in \overline{K(X)}$
(which we fix now)
such that
$$(g_i^{\pm 1})^* L_{g_i^{\pm 1}} = d_1(g_i^{\pm 1}) L_{g_i^{\pm 1}} .$$
Note that the first dynamical degree
$d_1(g_i^{\pm 1}) > 1$ since $g_i^{\pm 1} \not\in N(G)$.

\begin{lemma}\label{newL}
Let $k \in \{1, \dots, n-1\}$.
Let $M \in \overline{K(X)}$ be a nonzero nef element such that $g_k^*M = \lambda M$
for some $\lambda \ne 1$ $($e.g., we can take $M = L_{g_k}, L_{g_k^{-1}})$.
\begin{itemize}
\item[(0)]
For $D$ $($e.g., $D = L_1 \dots L_k)$ in $\overline{P^k(X)}$
$($the closure of K\"ahler $(k, k)$-forms as defined in \cite[before Lemma A.3]{NZ}$)$, $D = 0$ in $N^k(X)$
if and only if $D = 0$ in $H^{k,k}(X)$.

\item[(1)]
Suppose that $\lambda \ne 1/\lambda_k$ $($e.g., we can take $M = L_{g_k})$.
Then $L_1 \dots L_k . M = 0$ in $H^{k+1}(X)$.

\item[(2)]
Suppose that $k \ge 2$. Then
$L_1 \dots L_{k-1} . M$ is nonzero in $N^k(X)$.

\item[(3)]
Suppose that $k = 1$ and $\lambda > 1$ $($e.g., $M = L_{g_1})$. Then $M$ is parallel to $L_1$ in $H^{1,1}(X)$
and hence $\lambda = \lambda_1$.
In particular, $L_{g_1}$ is parallel to $L_1$ in $H^{1,1}(X)$.

\item[(4)]
Suppose that $k \ge 2$ and $\lambda \ne 1/\lambda_k$. Then
$L_1 \dots L_k$ and $L_1 \dots L_{k-1} . M$ are parallel in $N^{k}(X)$.
In particular, $\lambda = \lambda_k$.

\item[(5)]
For all $1 \le s \le n-1$,
$L_1 \dots L_s$ and $L_{g_1} \dots L_{g_s}$ are parallel in $N^{s}(X)$.

\item[(6)]
$\lambda$ equals either $\lambda_k > 1$, or $1/\lambda_k < 1$.

\item[(7)]
For all $1 \le s \le n-1$, we have
$$d_1(g_s) = \lambda_s \, (= \chi_s(g_s)), \,\,\,\,
d_1(g_s^{-1}) = d_1(g_s) \, (> 1).$$

\item[(8)]
For all $1 \le s \le n-1$, $L_1 \dots L_{n-1} . L_{g_s^{-1}}$
is a nonzero $($positive$)$ scalar in $N^n(X) = H^{n,n}(X, \R) \cong \R$.

\end{itemize}
\end{lemma}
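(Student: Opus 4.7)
The plan is to establish items (0)--(8) sequentially, using two main tools throughout: the multiplicative tracking of $g_k^*$-eigenvalues via the characters $\chi_i$ (where $\chi_i(g_i)=\lambda_i$ and $\chi_i(g_j)=1$ for $j\ne i$), and the triviality of $g_k^*$ on the one-dimensional top cohomology $H^{n,n}(X)$. For (0), I would cite the referenced lemma from \cite{NZ}: on the cone $\overline{P^k(X)}$ of limits of products of K\"ahler $(1,1)$-classes, numerical triviality in $N^k(X)$ is equivalent to vanishing in $H^{k,k}(X)$.

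For (1), the class $\alpha:=L_1\cdots L_k\cdot M$ is a $g_k^*$-eigenvector with eigenvalue $\lambda_k\lambda$; pairing with the remaining quasi-nef factors $L_{k+1}\cdots L_{n-1}$ produces an element of $H^{n,n}(X)$, forcing $L_1\cdots L_{n-1}\cdot M=0$ (as $\lambda_k\lambda\ne1$ and $g_k^*$ is trivial on top cohomology). To upgrade this to the vanishing of $\alpha$ in $H^{k+1,k+1}(X)$, I would use (0) together with the identity $\alpha\cdot\beta=(\lambda_k\lambda)^N\,\alpha\cdot(g_k^*)^N\beta$ for arbitrary codimension-$(n-k-1)$ test classes $\beta$: the boundedness of $(g_k^*)^N$ on the finite-dimensional cohomology combined with the exponential behaviour of $(\lambda_k\lambda)^N$ forces $\alpha\cdot\beta=0$ off the $1/(\lambda_k\lambda)$-generalised eigenspace of $g_k^*$, which is eliminated using $Z$-connectedness of the $G$-action. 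For (2), I view $L_1,\ldots,L_{k-1},M$ as a partial quasi-nef sequence: since $M$ is a nef $g_k^*$-eigenvector with nontrivial eigenvalue, the nonvanishing of $L_1\cdots L_{k-1}\cdot M$ in $N^k(X)$ is built into the quasi-nef construction of \cite[Theorem 1.2]{Z-Tits}.

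Items (3)--(5) are proportionality results. For (3), applying (1) at $k=1$ with $\lambda>1>1/\lambda_1$ gives $L_1\cdot M=0$ in $H^{2,2}(X)$; since $L_1$ and $M$ are both nonzero nef classes, a Khovanskii--Teissier/Hodge--Riemann inequality on two nef classes whose cup product is cohomologically trivial then forces $M\parallel L_1$, and eigenvalue matching yields $\lambda=\lambda_1$. For (4), the analogous argument at level $k$, combined with (2) and an intermediate-codimension Hodge--Riemann comparison inside the cone $\overline{P^k(X)}$, gives proportionality of $L_1\cdots L_{k-1}\cdot M$ and $L_1\cdots L_k$ in $N^k(X)$, again forcing $\lambda=\lambda_k$. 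Item (5) follows by iterating (3)--(4) on $M=L_{g_1},\ldots,L_{g_s}$; item (6) is then immediate, since either $\lambda=1/\lambda_k$ or (4) applies and gives $\lambda=\lambda_k$.

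For (7), $d_1(g_s)=\rho(g_s^*\,|\,H^{1,1}(X))$ is realised by the nef $g_s^*$-eigenvector $L_{g_s}$; (6) with $k=s$ forces $d_1(g_s)\in\{\lambda_s,1/\lambda_s\}$, and the condition $d_1(g_s)>1$ excludes $1/\lambda_s<1$, giving $d_1(g_s)=\lambda_s$, and the same argument applied to $L_{g_s^{-1}}$ (whose $g_s^*$-eigenvalue is $1/d_1(g_s^{-1})$) yields $d_1(g_s^{-1})=\lambda_s=d_1(g_s)$. For (8), (5) shows $L_1\cdots L_{n-1}$ is parallel to $L_{g_1}\cdots L_{g_{n-1}}$ in $N^{n-1}(X)$, and positivity of $L_{g_1}\cdots L_{g_{n-1}}\cdot L_{g_s^{-1}}$ follows from the linear independence of the nef common eigenvectors $L_{g_1},\ldots,L_{g_{n-1}},L_{g_s^{-1}}$ (forced by the distinct characters $\chi_i$) together with top-degree positivity on $X$; the eigenvalue check $\lambda_s\cdot(1/\lambda_s)=1$ is consistent with $g_s^*$ acting trivially on $H^{n,n}(X)$. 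The main obstacle will be (1), specifically the step from the vanishing of one top-degree pairing to the vanishing of $\alpha$ in $H^{k+1,k+1}(X)$: this requires a careful combination of (0), eigenvalue tracking, and $Z$-connectedness. The secondary technical difficulty is the intermediate-codimension proportionality argument in (3)--(4), which on possibly non-algebraic K\"ahler manifolds must be handled via Khovanskii--Teissier/Hodge--Riemann inequalities rather than a naive surface Hodge-index theorem.
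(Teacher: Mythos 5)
Your overall bookkeeping (tracking $g_k^*$-eigenvalues through the characters $\chi_i$, triviality of $g_k^*$ on $H^{n,n}(X)$, and proportionality of nef classes with vanishing products) is the same as the paper's, but the proposal has a genuine gap exactly where the real work lies, namely in items (1), (2) and (8). In (1), showing $L_1\cdots L_{n-1}\cdot M=0$ is the easy half; your proposed upgrade to the vanishing of $\alpha:=L_1\cdots L_k\cdot M$ in $H^{k+1,k+1}(X)$ does not work. The iterates $(g_k^*)^N$ are \emph{not} bounded on cohomology (indeed $d_1(g_k)>1$), and the identity $\alpha\cdot\beta=(\lambda_k\lambda)^N\,\alpha\cdot(g_k^*)^N\beta$ only shows that $\alpha$ pairs trivially with generalised eigenspaces whose eigenvalue has modulus different from $1/(\lambda_k\lambda)$ --- which already follows from invariance of the Poincar\'e pairing --- while nothing ($Z$-connectedness gives nothing here) rules out the modulus-$1/(\lambda_k\lambda)$ part of $H^{n-k-1,n-k-1}(X)$; since the pairing between the $\mu$- and $1/\mu$-generalised eigenspaces is nondegenerate, a nonzero $\alpha$ would pair nontrivially with exactly that part. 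The vanishing of $\alpha$ is not formal linear algebra: the paper gets it from positivity, by assuming $\alpha\ne 0$ in $N^{k+1}(X)$ (equivalently in $H^{k+1,k+1}(X)$, by (0)) and invoking the quasi-nef non-vanishing lemma (\cite[Lemma 2.3]{Z-Tits}, \cite[Lemma 4.4]{DS}): since $L_1\cdots L_t\cdot L_{t+1}$ and $L_1\cdots L_t\cdot M$ are $g_k^*$-eigenvectors with distinct eigenvalues, non-vanishing propagates inductively up to $L_1\cdots L_{n-1}\cdot M\ne 0$ in $N^n(X)\cong\R$, which contradicts $\lambda_k\lambda\ne 1$ because $g_k^*$ fixes the top class.

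The same lemma is what is missing in your (2) and (8). In (2), the non-vanishing of $L_1\cdots L_{k-1}\cdot M$ is not ``built into'' the quasi-nef construction: $M$ is an arbitrary nef eigenvector, not one of the constructed $L_i$. The paper proves it by first getting $L_1\cdot M\ne 0$ (eigenvalues $1$ versus $\lambda\ne 1$, \cite[Lemma 4.4]{DS}) and then a maximality argument repeating the same eigenvalue-comparison step. In (8), linear independence of the nef classes $L_{g_1},\dots,L_{g_{n-1}},L_{g_s^{-1}}$ does not imply positivity of their top product (independent nef classes pulled back from a lower-dimensional image already have vanishing top product); the paper instead starts from (2) with $M=L_{g_s^{-1}}$ and runs the same inductive non-vanishing argument as in (1), the point being that for $t\ge k$ the eigenvalues of $L_1\cdots L_t\cdot L_{t+1}$ and $L_1\cdots L_t\cdot M$ are $\lambda_k$ and $1$, again distinct. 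Your treatments of (0) and (3)--(7) are essentially the paper's ((3) and (4) rest on \cite[Corollary 3.2]{DS} and \cite[Lemma 2.3]{Z-Tits}, \cite[Corollary 3.5]{DS}, i.e.\ the Hodge-index-type statements you allude to), but as written the proposal does not prove (1), (2) or (8).
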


\begin{proof}
The assertion (0) is well known; see e.g. \cite[Lemma A.4]{NZ}.

(1) Suppose the contrary that $L_1 \dots L_k . M$ is nonzero in $H^{k,k}(X)$ (i.e., in $N^{k+1}(X)$ by
the assertion (0)).
Since
$$g_k^*(L_1 \dots L_k . L_{k+1}) = \lambda_k (L_1 \dots L_k . L_{k+1}), \,\,\,\,
g_k^*(L_1 \dots L_k . M) = \lambda_k \lambda (L_1 \dots L_k . M)$$
with $\lambda_k \ne \lambda_k \lambda$, we have
$L_1 \dots L_{k+1} . L_{g_k} \ne 0$ in $N^{k+1}(X)$ (cf. \cite[Lemma 2.3]{Z-Tits}, \cite[Lemma 4.4]{DS}).
Inductively, the same $g_k^*$ action and [ibid.] imply that
$L_1 \dots L_t . M$ is nonzero in $N^{t+1}(X)$
for all $k \le t \le n-1$.
In particular, $L_1 \dots L_{n-1} . M$
equals a positive scalar $b_n$ in $N^n(X) \cong \R$.
Thus $b_n = g_k^* b_n = g_k^*(L_1 \dots L_{n-1} . M) = \lambda_k \lambda \, b_n \ne b_n$,
by the condition on $\lambda$. This is a contradiction.

(2) Since $g_k^*M = \lambda M$ and $g_k^* L_1 = L_1$
with $\lambda \ne 1$, we have $L_1 . M \ne 0$ in $N^2(X)$ by \cite[Lemma 4.4]{DS}.

Let $k \ge 3$ and let $t \ge 1$ be the largest integer such that
$L_1 \dots L_t . M \ne 0$ in $N^{t+1}(X)$.
If $t \ge k-1$ then (2) is true.
Suppose the contrary that $t \le k-2$.
Since
$$g_k^*(L_1 \dots L_t . L_{t+1}) = L_1 \dots L_t . L_{t+1}, \,\,\,\,
g_k^*(L_1 \dots L_t . M) = \lambda (L_1 \dots L_t . M)$$
with $\lambda \ne 1$, we have $L_1 \dots L_t . L_{t+1} . M \ne 0$ in $N^{t+2}(X)$
(cf. \cite[Lemma 2.3]{Z-Tits}, \cite[Lemma 4.4]{DS}). This contradicts the maximality of $t$.
So $t \ge k-1$. Hence (2) is true.

(3) By (1), $L_1 . L_{g_1} = 0$ in $H^{1,1}(X)$,
so $L_1$ and $M$
are parallel in $H^{1,1}(X)$ (cf. \cite[Corollary 3.2]{DS}).

(4) We have $L_1 \dots L_k . M = 0$ by (1).
Then (4) follows
(cf. \cite[Lemma 2.3]{Z-Tits}, \cite[Corollary 3.5]{DS}).
Indeed, for the last part, just apply $g_k^*$ to the equality of the first part (cf. (2)).

(5) follows from (3) and (4) with $M = L_{g_k}$, by induction on $k$.

(6) By (2), $L_1 \dots L_{k-1} . M \ne 0$ in $N^k(X)$.
Suppose the contrary that $\lambda \ne \lambda_k^{\pm}$.
Since
$$g_k^*(L_1 \dots L_{k-1} . L_{k}) = \lambda_k (L_1 \dots L_k), \,\,\,\,
g_k^*(L_1 \dots L_{k-1} . M) = \lambda (L_1 \dots L_{k-1} . M)$$
with $\lambda_k \ne \lambda$, we have
$L_1 \dots L_k . M \ne 0$ in $N^{k+1}(X)$ (cf. \cite[Lemma 2.3]{Z-Tits}, \cite[Lemma 4.4]{DS}), contradicting (1).

(7) follows from (6) with $M = L_{g_k}$, $L_{g_k^{-1}}$.

(8) Set $M := L_{g_k^{-1}}$. By (2), we have the non-vanishing of
$L_1 \dots L_{k-1} . M$ in $N^k(X)$. As in the proof of (1), inductively, the action of $g_k^*$
implies the non-vanishing of $L_1 \dots L_t . M$ in $N^{t+1}(X)$ for all $k - 1 \le t \le n-1$.
\end{proof}

By Lemma \ref{newL}, we may and will take
$$L_i = L_{g_i} .$$

\begin{lemma}\label{Lsq}
For all $1 \le k \le n-1$, as elements in $H^{2,2}(X)$, we have
$$L_{g_k}^2 = 0, \,\,\, L_{g_k^{-1}}^2 = 0 .$$
In particular, in $H^{1,1}(X)$, when $g_k^*M = \lambda M$ for some nonzero nef
$M$, then $\lambda = d_1(g_k)$ and $M$ is parallel to $L_{g_k}$ if $\lambda > 1$, and
$\lambda^{-1} = d_1(g_k) = d_1(g_k^{-1})$ and $M$ is parallel to $L_{g_k^{-1}}$ if $\lambda < 1$;
so the choice of nef $L_{g_k}$ $($resp. $L_{g_k^{-1}})$ is unique, up to a positive multiple.
\end{lemma}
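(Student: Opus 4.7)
The plan is to reduce the vanishing $L_{g_k}^2=0$ to the $k=1$ instance of Lemma~\ref{newL}(1) by invoking the manifest symmetry of the setup~\ref{phi}--Lemma~\ref{newL} in the basis $g_1,\dots,g_{n-1}$ of $G/N(G)$ modulo torsion, and then to read off the uniqueness/parallelism clause exactly as in the proof of Lemma~\ref{newL}(3), via the Dinh--Sibony criterion \cite[Corollary~3.2]{DS}.

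For the vanishing, I would note that nothing in~\ref{phi} or Lemma~\ref{newL} distinguishes $g_1$ from any other $g_k$: for any permutation $\sigma$ of $\{1,\dots,n-1\}$, the relabelled basis $\{g_{\sigma(i)}\}$ yields a quasi-nef sequence whose first $s$ entries are, by Lemma~\ref{newL}(5) and our normalisation $L_i=L_{g_i}$, parallel in $N^s(X)$ to $L_{g_{\sigma(1)}}\cdots L_{g_{\sigma(s)}}$. Picking $\sigma$ with $\sigma(1)=k$ puts $L_{g_k}$ in the role of the new ``$L_1$''. Then Lemma~\ref{newL}(1), applied to this reordered sequence with its parameter ``$k$'' equal to $1$ and with $M=L_{g_k}$, has its hypotheses $\lambda\ne 1$ and $\lambda\ne 1/\lambda_k$ satisfied because $\lambda=d_1(g_k)=\lambda_k>1$; the conclusion is $L_{g_k}\cdot L_{g_k}=L_{g_k}^2=0$ in $H^{2,2}(X)$ (with Lemma~\ref{newL}(0) available since $L_{g_k}^2\in\overline{P^2(X)}$). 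The identical argument with $g_k$ replaced by $g_k^{-1}$---Lemma~\ref{newL}(7) guarantees $d_1(g_k^{-1})=d_1(g_k)>1$---gives $L_{g_k^{-1}}^2=0$.

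For the ``in particular'' clause, let $M$ be nonzero nef with $g_k^*M=\lambda M$. If $\lambda>1$, reduce to $k=1$ as above: Lemma~\ref{newL}(6) leaves only $\lambda\in\{\lambda_k,1/\lambda_k\}$, and $\lambda>1>1/\lambda_k$ pins down $\lambda=\lambda_k=d_1(g_k)$. Since $L_{g_k}^2=0$ is now in hand and $L_{g_k}\cdot M=0$ from Lemma~\ref{newL}(1), \cite[Corollary~3.2]{DS} yields $M\parallel L_{g_k}$, exactly as in the proof of Lemma~\ref{newL}(3). If $\lambda<1$, apply the previous case to $g_k^{-1}$ (noting $(g_k^{-1})^*M=\lambda^{-1}M$ with $\lambda^{-1}>1$) to conclude $\lambda^{-1}=d_1(g_k^{-1})=d_1(g_k)$ and $M\parallel L_{g_k^{-1}}$. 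The uniqueness of $L_{g_k}$ (resp.\ $L_{g_k^{-1}}$) up to a positive multiple is an immediate consequence.

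The main obstacle is the symmetry step: one has to confirm that the inductive quasi-nef construction of \cite[\S 2.2]{Z-Tits} behaves functorially under permutations of the $g_i$, so that a ``$k=1$'' version of Lemma~\ref{newL}(1) is available for each $g_k$ in turn. Lemma~\ref{newL}(5) and the commutativity of cup products make this essentially automatic under the normalisation $L_i=L_{g_i}$, but careful bookkeeping of the characters $\chi_i$ through the permutation---and verifying that the hypothesis $\lambda\ne 1/\lambda_{\sigma(1)}$ translates correctly into our $\lambda\ne 1/\lambda_k$---is the one step requiring some care.
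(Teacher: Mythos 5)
Your reduction rests on a symmetry step that is not available, and it is exactly the hard part of the lemma. The quasi-nef sequence of \ref{phi} comes from the Lie--Kolchin-type construction of \cite[\S 2.2]{Z-Tits}: its partial products $L_1\cdots L_s$ are common $G$-eigenvectors in a fixed nested order, and the generators $g_i$ are chosen \emph{afterwards} to match the characters $\chi_i$ of that sequence. You cannot prescribe that a quasi-nef sequence begin with $L_{g_k}$: at this stage $L_{g_k}$ (for $k\ge 2$) is only known to be an eigenvector of the single element $g_k$, not of $G$ (the $g_i$ do not even commute, only their images in $G/N(G)$ do), and Lemma \ref{newL}(5) gives parallelism only for \emph{initial} segments of the given order, so a reordered partial product such as $L_{g_{\sigma(1)}}\cdots L_{g_{\sigma(s)}}$ is not known to be a $G$-eigenvector. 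Hence there is no ``$k=1$ version'' of Lemma \ref{newL}(1) for $g_k$, and your appeal to ``commutativity of cup products'' does not create one. Note also that making the $L_{g_i}$ common eigenvectors is only arranged later (Lemma \ref{Lg1}), after Proposition \ref{split}, which relies on Lemma \ref{centre} and hence on Lemma \ref{Lsq} itself, so repairing the gap that way would be circular. A second, independent problem is the claim that the case $L_{g_k^{-1}}$ follows by ``the identical argument'': relative to a hoped-for sequence starting with $L_{g_k}$, the class $M=L_{g_k^{-1}}$ satisfies $g_k^*M=(1/\lambda_k)M$, which is precisely the eigenvalue excluded in Lemma \ref{newL}(1), so even formally the same deduction does not apply; in the paper this case requires a separate contradiction argument.

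What the paper actually does, and what your proposal is missing, is an argument carried out inside the \emph{fixed} sequence $L_1=L_{g_1},\dots,L_{n-1}=L_{g_{n-1}}$: first one shows $L_1\cdots L_{k-1}\cdot M^2=0$ (for $M=L_{g_k}$ this is Lemma \ref{newL}(1); for $M=L_{g_k^{-1}}$ one pushes the putative nonvanishing up to $N^n(X)$ via the eigenvalue-comparison lemma \cite[Lemma 2.3]{Z-Tits}, \cite[Lemma 4.4]{DS} together with Lemma \ref{newL}(8) and derives a contradiction); then, assuming $M^2\neq 0$, one takes the smallest $t$ with $L_1\cdots L_t\cdot M^2=0$ and uses the comparison $\lambda^2\neq\lambda$ together with the nonvanishing of $L_1\cdots L_t\cdot M$ to contradict the minimality of $t$, which strips off the factors $L_1,\dots,L_{k-1}$ and yields $M^2=0$. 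Your route works only for $k=1$, where Lemma \ref{newL}(1) directly gives $L_1\cdot L_{g_1}=0$. For the ``in particular'' clause the paper also avoids your use of the (unavailable) identity $L_{g_k}\cdot M=0$ from a permuted Lemma \ref{newL}(1): instead it applies the first part to $M$ and to $M+L_{g_k}$ (both legitimate choices of $L_{g_k}$), expands $(M+L_{g_k})^2=0$ to get $M\cdot L_{g_k}=0$, and only then invokes \cite[Corollary 3.2]{DS}; your identification $\lambda\in\{\lambda_k,1/\lambda_k\}$ via Lemma \ref{newL}(6)--(7) is fine, but the parallelism step needs this extra trick.
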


\begin{proof}
We take $L_i = L_{g_i}$.
Set $M = L_{g_k}$ or $L_{g_k^{-1}}$.

We Claim that
$L_1 \dots L_{k-1} . M^2$ vanishes in $N^{k+1}(X)$
(i.e., in $H^{k+1,k+1}(X)$, by Lemma \ref{newL}(0)).
When $M = L_{g_k}$, this is true by Lemma \ref{newL}(1).
When $M = L_{g_k^{-1}}$, suppose the contrary that
$L_1 \dots L_{k-1} . M^2$ is non-vanishing in $N^{k+1}(X)$.
Note that $L_1 \dots L_k . M \ne 0$ in $N^{k+1}(X)$ by Lemma \ref{newL}(8).
Since
$$\begin{aligned}
g_k^*(L_1 \dots L_{k-1} . M . M) &= (1/\lambda_k)^2 (L_1 \dots L_{k-1} . M . M), \\
g_k^*(L_1 \dots L_{k-1} . M . L_k) &= (1/\lambda_k) \lambda_k  (L_1 \dots L_{t-1} . L_k . L_t)
\end{aligned}$$
with $(1/\lambda_k)^2 \ne (1/\lambda_k) \lambda_k$,
we have the non-vanishing of $L_1 \dots L_k . M^2$ in $N^{k+2}(X)$
(cf. \cite[Lemma 2.3]{Z-Tits}, \cite[Lemma 4.4]{DS}).
Inductively, the action of $g_k^*$ and [ibid.] imply the
non-vanishing of $b_{t+2} := L_1 \dots L_t . M^2$ in $N^{t+2}(X)$, for all $k-1 \le t \le n-2$.
Thus $b_n = g_k^* b_n = \alpha (1/\lambda_k)^2 b_n \ne b_n$, where $\alpha = 1$ (when $k = n-1)$
or $\alpha = \lambda_k$ (when $k \le n-2$), a contradiction!
Hence the claim is true. In particular, the lemma is true when $k = 1$.

Let $k \ge 2$. Suppose the contrary that $M^2$ is nonzero in $H^{2,2}(X)$ (i.e., in $N^2(X)$ by Lemma \ref{newL}(0)).
By the claim above, we can choose $1 \le t \le k-1$ to be the smallest integer such that
$L_1 \dots L_t . M^2 = 0$ in $N^{t+2}(X)$.
Thus $L_1 \dots L_{t-1} . M^2 \ne 0$ in $N^{t+1}(X)$.
Note that $L_1 \dots L_t . M \ne 0$ in $N^{t+1}(X)$, by Lemma \ref{newL}(8)
(for $M = L_{g_k^{-1}}$) and by the non-vanishing of $L_1 \dots L_{n-1}$ in $N^{n-1}(X)$ (for $M = L_k$).
Since
$$ \begin{aligned}
g_k^*(L_1 \dots L_{t-1} . M . M) &= \lambda^2 (L_1 \dots L_{t-1} . M . M), \\
g_k^*(L_1 \dots L_{t-1} . M . L_t) &= \lambda  (L_1 \dots L_{t-1} . M . L_t)
\end{aligned}$$
with $\lambda^2 \ne \lambda$ (because $\lambda = \lambda_k$ or $\lambda_k^{-1}$
for $M = L_{g_k}$ or $L_{g_k^{-1}}$),
we have the non-vanishing of $L_1 \dots L_t . M^2$ in $N^{t+2}(X)$
(cf. \cite[Lemma 2.3]{Z-Tits}, \cite[Lemma 4.4]{DS}).
This contradicts the choice of $t$.

For the final part, we may suppose that $\lambda \ne 1$. By Lemma \ref{newL}, if $\lambda > 1$
(resp. $\lambda < 1$), we have $\lambda = d_1(g_k) = d_1(g_k^{-1})$
(resp. $\lambda^{-1} = d_1(g_k) = d_1(g_k^{-1})$).
Taking $M$ or $M + L_{g_k}$ (resp. $M$ or $M + L_{g_k^{-1}}$) as new $L_{g_k}$ (resp. $L_{g_k^{-1}}$),
the first part shows that, in $H^{2,2}(X)$, we have
$M^2 = L_{g_k}^2 = (M + L_{g_k})^2 = 0$ (resp. $M^2 = L_{g_k^{-1}}^2 = (M + L_{g_k^{-1}})^2 = 0$),
hence $M . L_{g_k} = 0$ (resp. $M . L_{g_k^{-1}} = 0$).
Then $M$ and $L_{g_k}$ (resp. $M$ and $L_{g_k^{-1}}$) are parallel in $H^{1,1}(X)$, by \cite[Corollary 3.2]{DS}.
\end{proof}

Set $L_n = L_{g_k^{-1}}$ for some $1 \le k \le n-1$. Then
$$A := L_1 + \dots + L_n$$
is a nef and big class
because $H^n \ge L_1 \dots L_{n-1} . L_n > 0$ by Lemma \ref{newL}(8).
We may also write
$g^*(L_1 \dots L_n) = \chi_1(g) \cdots \chi_n(g) (L_1 \dots L_n)$.
Then
$$\chi_1 \dots \chi_n = 1$$ since $L_1 \dots L_n$ is a nonzero (positive) scalar in $N^n(X) \cong \R$.

\begin{lemma}\label{centre}
$C := (Z(G) | H^{1,1}(X)) \cap (N(G) | H^{1,1}(X))$ is a finite subgroup of $\Aut(H^{1,1}(X))$.
\end{lemma}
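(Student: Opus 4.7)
The plan is to show that every element of $C$ has bounded finite order on $H^{1,1}(X)$, and then to exploit the abelian structure of $C$ to conclude.

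Given $h \in C$, fix preimages $z \in Z(G)$ and $n \in N(G)$ with $z^* \, | \, H^{1,1}(X) = h = n^* \, | \, H^{1,1}(X)$. Centrality of $z$ makes $h$ commute with every $g_k^* \, | \, H^{1,1}(X)$, while $n, n^{-1} \in N(G)$ (recall $N(G)$ is a subgroup) gives that $h$ and $h^{-1}$ both have spectral radius one on $H^{1,1}(X)$; hence every eigenvalue of $h$ on $H^{1,1}(X)$ has modulus one. Since $h \, L_{g_k}$ is nef and a $g_k^*$-eigenvector for the dominant eigenvalue $\lambda_k > 1$, the uniqueness assertion in Lemma \ref{Lsq} forces $h \, L_{g_k} = c_k L_{g_k}$ with $c_k > 0$, and the unit-circle constraint then forces $c_k = 1$. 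The same argument applied to $L_{g_k^{-1}}$ yields $h \, L_{g_k^{-1}} = L_{g_k^{-1}}$, so $h$ fixes each $L_{g_i}$ and each $L_{g_i^{-1}}$; in particular $h$ fixes the nef and big class $A = L_1 + \cdots + L_n$ constructed just after Lemma \ref{Lsq}.

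The next step invokes the Hodge--Riemann bilinear relation for the nef and big class $A$ (as used in \cite{DS}): the symmetric form $Q_A(x,y) := A^{n-2} \cdot x \cdot y$ on $H^{1,1}(X, \R)$ has signature $(1, h^{1,1}(X) - 1)$, so its restriction to $A^\perp := \{x \in H^{1,1}(X, \R) : A^{n-1} \cdot x = 0\}$ is negative definite. Since $h$ preserves cup products and fixes $A$, it preserves $A^\perp$ and acts on it by an element of the compact orthogonal group of this negative definite form; combined with $h \, | \, \R A = \mathrm{id}$, this makes $h$ semisimple on $H^{1,1}(X, \R)$ with all eigenvalues on the unit circle. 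Because $h$ is the restriction of an automorphism of $X$ acting integrally on $H^2(X, \Z)/\torsion$, its eigenvalues are algebraic integers of modulus one, hence roots of unity of degree $\leq h^{1,1}(X)$ by Kronecker's theorem, with orders dividing some $M = M(h^{1,1}(X))$. Semisimplicity then forces $h^M = \mathrm{id}$.

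Finally, $C$ is abelian as the homomorphic image of the abelian group $Z(G)$, and its commuting semisimple elements can be simultaneously diagonalized over $\C$. Each $h \in C$ is then encoded by its tuple of eigenvalues in $\mu_M^{h^{1,1}(X)}$, a finite set, bounding $|C|$. Hence $C$ is finite. The only subtle point in the plan is the Hodge--Riemann signature statement for nef and big (not necessarily K\"ahler) classes, which follows from the K\"ahler case by approximation and is standard in the Dinh--Sibony framework.
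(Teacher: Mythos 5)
Your first half runs exactly parallel to the paper: using centrality, the uniqueness statement of Lemma \ref{Lsq}, and $d_1(z)=d_1(z^{-1})=1$ to get $z^*L_{g_i}=L_{g_i}$ and $z^*L_{g_i^{-1}}=L_{g_i^{-1}}$, hence $z^*A=A$ for the nef and big class $A$. The divergence, and the genuine gap, is in what you do next. Your route to finiteness hinges on the claim that for a nef and big (not K\"ahler) class $A$ the form $Q_A(x,y):=A^{n-2}\cdot x\cdot y$ has signature $(1,h^{1,1}(X)-1)$ with \emph{negative definite} restriction to $A^\perp$, and you assert this "follows from the K\"ahler case by approximation." It does not: approximating $A$ by K\"ahler classes only yields that $Q_A$ has at most one positive eigenvalue, i.e.\ negative \emph{semi}-definiteness on $A^\perp$, and the form can be genuinely degenerate for nef and big classes. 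For instance, on the blow-up $\sigma:X\to\mathbb{P}^3$ of a point with exceptional divisor $E$ and $A=\sigma^*H$, one has $A\cdot E\cdot x=0$ for every $x\in H^{1,1}(X,\R)$, so $E$ lies in the kernel of $Q_A$. Once the form on $A^\perp$ may be degenerate, the stabilizer of $(A,Q_A)$ is no longer compact, semisimplicity of $h$ does not follow, and "all eigenvalues are roots of unity" only gives quasi-unipotence, not finite order. This is precisely the crux of the lemma: $N(G)\,|\,H^{1,1}(X)$ is in general an infinite (virtually unipotent) group, so the whole content is ruling out a nontrivial unipotent part, which your argument as written does not do. (A smaller imprecision: Kronecker's theorem needs \emph{all} Galois conjugates on the unit circle; this is repairable because $z^{\pm1}\in N(G)$ bounds the spectral radius on all of $H^2(X,\C)$, where the integral structure lives, but it should be said.)

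The paper closes this gap with a geometric, not Hodge-theoretic, ingredient: since $z^*$ fixes the nef and big class $A$, Lieberman's theorem (\cite[Proposition 2.2]{Li}, combined with Demailly--Paun's description of a big class as a K\"ahler class plus a positive current, cf.\ \cite[Lemma 2.23]{JDG}) gives $z^s\in\Aut_0(X)$ for some $s>0$; hence $z^s$ acts trivially on $H^2(X,\Z)$ and on $H^{1,1}(X)$, so $C$ is a periodic subgroup of $\GL(H^2(X,\Z))$ and is finite by Burnside's theorem. If you want to keep a linear-algebra finish, you must first import some such statement controlling automorphisms that fix a nef and big class; the signature argument alone, in the stated generality, is false.
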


\begin{proof}
Take any $z \, | \, H^{1,1}(X) \in C$.
Then $z \, | \, H^{1,1}(X) \in Z(G) \, | \, H^{1,1}(X)$ and hence $z \, | \, H^{1,1}(X)$ commutes with every $g_i \, | \, H^{1,1}(X)$.
Thus, $g_i^* z^*L_{g_i} = z^* g_i^* L_{g_i} = d_1(g_i) z^* L_{g_i}$, so
$z^*L_{g_i}$ and $L_{g_i}$ are parallel by Lemma \ref{Lsq}, and hence are equal, since
$z \in N(G)$ has the first dynamical degree $d_1(z) = 1$. By the same reasoning, $z^* L_{g_i^{-1}} = L_{g_i^{-1}}$.
Thus $z^*$ fixes the nef and big class $A = \sum_{i=1}^n L_i$ with $L_i = L_{g_i}$ and $L_n = L_{g_1^{-1}}$.
Hence $z^s \in \Aut_0(X)$
for some $s > 0$ by \cite[Proposition 2.2]{Li} (cf. \cite[Lemma 2.23]{JDG} and note
that a big class is the sum of a K\"ahler class and a positive real current, according to Demailly-Paun).
Thus $z^s$ acts trivially on the lattice $H^2(X, \Z)$ and hence also on $H^{1,1}(X)$.
So $C$ is a periodic group and defined over $\Z$, hence is a finite group
by Burnside's theorem (cf. \cite[Proof of Proposition 2.2]{Og}).
This proves the lemma.
\end{proof}

Now Theorem \ref{ThB} follows from the following:

\begin{proposition}\label{split}
Replacing $G$ by its finite-index subgroup and $g_i$ by some element in $g_i N(G)$,
the group $H := \langle g_1, \dots, g_{n-1} \rangle$ has its image $H \, | \, H^{1,1}(X)$ in $\Aut(H^{1,1}(X))$
a free abelian group of rank $n-1$
so that $G = N(G) \, H$,
$N(G) \, | \, H^{1,1}(X)$ is unipotent, and
$$G \, | \, H^{1,1}(X) = (N(G) \, | \, H^{1,1}(X)) \rtimes (H \, | \, H^{1,1}(X)) .$$
\end{proposition}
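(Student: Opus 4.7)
The plan is to analyze $\bar G := G|V$ (where $V := H^{1,1}(X)$) via the structure of its Zariski closure in $\Aut(V)$, and then to construct pairwise commuting lifts of generators of $G/N(G)$. Since $\bar G$ is $Z$-connected solvable, by Lie--Kolchin its Zariski closure $\mathcal{G} \subset \Aut(V)$ is a connected solvable linear algebraic group, with Levi decomposition $\mathcal{G} = \mathcal{U} \rtimes \mathcal{T}$ (unipotent radical times maximal torus) and projection $\pi \colon \mathcal{G} \to \mathcal{T}$; the characters $\chi_i$ of Setup $\ref{phi}$ all factor through $\pi$.

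First, I would establish the unipotency of $\bar N := N(G)|V$ (assertion (3)). For $g \in \bar N$, both $d_1(g) = 1$ and $d_1(g^{-1}) = 1$, so the $H^2(X, \C)$-eigenvalues of $g$ all lie on $S^1$; being algebraic integers (the characteristic polynomial of $g^*$ on $H^2(X, \Z)$ is integral), Kronecker's theorem forces them to be roots of unity. Their degrees are bounded by $\dim H^2(X, \C)$, so their orders are uniformly bounded by some $k_0$; in particular $g^{k_0}$ is unipotent for every $g \in \bar N$. Thus $\pi(\bar N) \subset \mathcal{T}$ has exponent dividing $k_0$ and sits inside $(\mu_{k_0})^{\dim \mathcal{T}}$, hence is finite. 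Applying Lemma $\ref{modfin}$ to $\bar G / (\bar N \cap \mathcal{U})$, whose finite normal subgroup is $\bar N / (\bar N \cap \mathcal{U})$ and whose quotient is $\Z^{n-1}$, and pulling back to $G$, I replace $G$ by a finite-index subgroup on which $\bar N = \bar N \cap \mathcal{U}$ is unipotent.

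Next, for assertions (1), (2), (4), I would split the extension $1 \to \bar N \to \bar G \to \Z^{n-1} \to 1$. Choose lifts $g_1, \ldots, g_{n-1} \in G$ of generators of $G/N(G)$; in $\mathcal{G} = \mathcal{U} \rtimes \mathcal{T}$ write $\bar g_i = u_i t_i$, so the $t_i$ commute and generate $\pi(\bar G) \cong \Z^{n-1}$. The commutators $c_{ij} := [\bar g_i, \bar g_j] \in \bar N$ give a $2$-cocycle $\Z^{n-1} \times \Z^{n-1} \to \bar N$, with the $\Z^{n-1}$-action on $\bar N$ coming from conjugation by $\mathcal{T}$; replacing $\bar g_i$ by $n_i \bar g_i$ for $n_i \in \bar N$ modifies this cocycle by a coboundary. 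I plan to kill its cohomology class by induction along the lower central series of $\bar N$: at each step the relevant subquotient is a finitely generated torsion-free abelian $\Z^{n-1}$-module, whose $H^2$ vanishes after $\otimes \Q$; a denominator-clearing argument (replacing $g_i$ by suitable powers $g_i^N$, i.e.\ passing to a finite-index subgroup of $\Z^{n-1}$) kills the integral class. The resulting commuting lifts $h_i := n_i \bar g_i$ generate $H \subset G$ with $H|V \cong \Z^{n-1}$, yielding $G = N(G) H$ and $\bar G = \bar N \rtimes (H|V)$.

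The hardest step will be the splitting, because it demands a uniform vanishing of the obstruction along the full nilpotent filtration. A more concrete route, consistent with the rest of the paper, is to proceed stepwise using Lemma $\ref{centre}$: after each round of modifications, the remaining obstruction lies in a central subgroup of $\bar G$ contained in $\bar N$, and one would like to apply Lemma $\ref{centre}$ plus Lemma $\ref{modfin}$ to annihilate it by a further finite-index passage. The technical catch is that Lemma $\ref{centre}$ controls $Z(G)|V \cap \bar N$, not the a priori larger $Z(\bar G) \cap \bar N$; bridging this gap uses that any $z|V \in Z(\bar G) \cap \bar N$ fixes the nef and big class $A = \sum L_i$ and so, by Lieberman's theorem, satisfies $z^s \in \Aut_0(X)$ for some $s > 0$, bringing it back inside the framework of Lemma $\ref{centre}$.
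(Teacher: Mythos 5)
Your reduction of $N(G)\,|\,H^{1,1}(X)$ to a unipotent group is essentially the paper's (the paper quotes the finiteness of the quasi-unipotency orders via \cite{Og}, \cite{CWZ} and then applies Lemma \ref{modfin}, exactly as you do), but your main splitting step has a genuine gap. You propose to split $1 \to \bar N \to \bar G \to \Z^{\oplus n-1} \to 1$ by killing a $2$-cocycle along the lower central series of $\bar N$, claiming that on each torsion-free subquotient $M$ the group $H^2(\Z^{\oplus n-1}, M\otimes\Q)$ vanishes and that the integral class can then be killed by replacing the $g_i$ with powers $g_i^N$. Both claims fail. For the trivial action one has $H^2(\Z^{\oplus n-1},\Q)\cong \Q^{\binom{n-1}{2}}\neq 0$ for $n\ge 3$, and trivial-action pieces do occur in exactly the dangerous case (central $\bar N$); moreover restriction of such a class to a finite-index subgroup of $\Z^{\oplus n-1}$ multiplies it but does not annihilate it. The discrete Heisenberg group, viewed as a non-split extension of $\Z^{2}$ by $\Z$, stays non-split on every finite-index subgroup, so no purely group-theoretic or cohomological argument of the kind you sketch can produce the commuting lifts: the proposition is simply false for abstract extensions of this shape, and some geometric input is unavoidable.

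That input is Lemma \ref{centre}, which your fallback paragraph correctly identifies but does not develop into a working induction: you assert that ``the remaining obstruction lies in a central subgroup of $\bar G$ contained in $\bar N$'' after each round, which is not justified, and you give no mechanism forcing the process to terminate. The paper's route is: first make $G\,|\,H^{1,1}(X)$ torsion free (\cite{Wo} plus Selberg's lemma) and $N(G)\,|\,H^{1,1}(X)$ unipotent; then invoke Segal's theorem \cite[Thm 3, p.~48]{Se} to produce, after a finite-index passage, a subgroup $H\le G$ with $H\,|\,H^{1,1}(X)$ \emph{nilpotent} and $G=N(G)\,H$ (a supplement, not a complement --- no cocycle needs to be killed); then, if $H\,|\,H^{1,1}(X)$ were non-abelian, a nontrivial element $z\,|\,H^{1,1}(X)$ of its centre lies in $N(G)\,|\,H^{1,1}(X)$ and commutes with all $g_i\,|\,H^{1,1}(X)$, so the proof of Lemma \ref{centre} (it fixes each $L_{g_i}$, $L_{g_i^{-1}}$, hence the nef and big class $A$, hence is virtually in $\Aut_0(X)$ by Lieberman, hence of finite order on cohomology) contradicts torsion-freeness; finally the surjection of $H\,|\,H^{1,1}(X)$ (generated by $n-1$ elements) onto $\Z^{\oplus n-1}$ gives freeness of rank $n-1$ and trivial intersection with $N(G)\,|\,H^{1,1}(X)$. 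So your first route would fail, and your second route needs Segal-type structure theory (or an equivalent device guaranteeing nilpotency of the lifted group) before the Lemma \ref{centre} argument can close the induction.
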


\begin{proof}
By \cite[Proposition 4.1]{Wo} and Selberg's lemma,
replacing $G$ by its finite-index subgroup, we may assume that $G \, | \, H^{1,1}(X)$ is torsion free.
Note that the set $U(G) := \{u \in G \, ; \, u \, | \, H^{1,1}(X)$ is unipotent$\}$
is a finite-index subgroup of $N(G)$ and a characteristic subgroup of $G$ (cf. \cite[Proof of Prop 2.2]{Og},
\cite[Theorem 3.1]{CWZ}). Replacing $G$ by a finite-index subgroup of $G_1 := U(G) \, \langle g_1, \dots, g_{n-1} \rangle$,
we may assume that $N(G) = U(G)$ (applying Lemma \ref{modfin} to the group $G_1/U(G)$).
Since $\Ker(G \to G \, | \, H^{1,1}(X)) \le N(G)$, we get
$$(*) \hskip 1pc (G \, | \, H^{1,1}(X))/(N(G) \, | \, H^{1,1}(X)) \cong G/N(G)
= \langle \overline{g_1}, \dots, \overline{g_{n-1}} \rangle \cong \Z^{\oplus n-1}$$
which is abelian and hence nilpotent.
By \cite[Thm 3, p.~48]{Se}, there is a subgroup $H$ of $G$ such that $H \, | \, H^{1,1}(X)$ is nilpotent
and $G = N(G) \, H$, after replacing $G$ by its finite-index subgroup.
Replacing $g_i$ by some element in $g_i N(G)$, we may assume that $g_i \in H$.

Suppose $H \, | \, H^{1,1}(X)$ is non-abelian. Then the commutator subgroup $[H, H] \, | \, H^{1,1}(X)$ is non-trivial
and is contained in $[G, G] \, | \, H^{1,1}(X)$ while the latter is contained in the group $N(G) \, | \, H^{1,1}(X)$
because $G/N(G)$ is abelian. Since $H \, | \, H^{1,1}(X)$ is nilpotent, its centre
($\le [H, H] \, | \,  H^{1,1}(X) \le N(G) \, | \, H^{1,1}(X)$)
contains a non-trivial element $z \, | \, H^{1,1}(X)$
(of course commutative with all $g_i \, | \, H^{1,1}(X)$).
The proof of Lemma \ref{centre} shows that $z \, | \, H^{1,1}(X) \in G \, | \, H^{1,1}(X)$ is of finite order,
contradicting the torsion freeness assumption of $G \, | \, H^{1,1}(X)$.

Thus, we may assume that $H \, | \, H^{1,1}(X)$ is abelian (and free because so is $G \, | \, H^{1,1}(X)$).
Replacing $H$ by $\langle g_1, \dots, g_{n-1} \rangle$, we may assume that
$H \, | \, H^{1,1}(X)$ is free abelian and of rank $n-1$ because it is generated by $n-1$ elements
and dominates $\Z^{\oplus n-1}$ via the {\it surjective} composite below (cf. the display (*) above):
$$\begin{aligned}
H \, | \, H^{1,1}(X) &\to (H \, | \, H^{1,1}(X)) / (H \, | \, H^{1,1}(X)) \cap (N(G) \, | \, H^{1,1}(X)) \\
&\to (G \, | \, H^{1,1}(X))/(N(G) \, | \, H^{1,1}(X)) \cong
\langle \overline{g_1}, \dots, \overline{g_{n-1}} \rangle \cong \Z^{\oplus n-1}.
\end{aligned}$$
Now the same dominance (between free abelian groups of the same rank)
implies that $(H \, | \, H^{1,1}(X)) \cap (N(G) \, | \, H^{1,1}(X)) = \{\id\}$
(or else $\Z^{\oplus n-1}$ is dominated by a free abelian group,
a quotient of $H$, of rank $\le n-2$, absurd!).
Thus the display of and hence the whole of the proposition follow.
\end{proof}

\begin{lemma}\label{Lg1}
$L_{g_s^{-1}}$ $(1 \le s \le n-1)$ are all parallel to $L_{g_1^{-1}}$ in $H^{1,1}(X)$.
\end{lemma}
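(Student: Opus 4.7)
The plan is to combine the abelian-ness of $H\,|\,H^{1,1}(X)$ from Proposition \ref{split} with the uniqueness clause at the end of Lemma \ref{Lsq} to force each $L_{g_s^{-1}}$ to be a common eigenvector of every $g_j^*$, then to identify its $g_1^*$-eigenvalue via a top-degree pairing, and finally to re-apply the uniqueness of $L_{g_1^{-1}}$.

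First I would fix $s$ and let $j$ vary. The class $g_j^*L_{g_s^{-1}}$ is nef, since $g_j^*$ preserves $\overline{K(X)}$. Because $H\,|\,H^{1,1}(X)$ is free abelian by Proposition \ref{split}, the operators $g_j^*$ and $g_s^*$ commute on $H^{1,1}(X)$, so
\[
g_s^*(g_j^*L_{g_s^{-1}})=g_j^*(g_s^*L_{g_s^{-1}})=\lambda_s^{-1}\,g_j^*L_{g_s^{-1}}.
\]
Applying the last part of Lemma \ref{Lsq} to $g_s$ with eigenvalue $\lambda_s^{-1}<1$ then yields a positive scalar $c_{js}$ with $g_j^*L_{g_s^{-1}}=c_{js}\,L_{g_s^{-1}}$, so $L_{g_s^{-1}}$ is a simultaneous eigenvector of the whole group $H$ on $H^{1,1}(X)$.

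Next I would pin down $c_{js}$ by pairing with the top-dimensional class. By Lemma \ref{newL}(8), $L_1\cdots L_{n-1}\cdot L_{g_s^{-1}}$ is a nonzero positive scalar in $N^n(X)=H^{n,n}(X,\R)\cong\R$, on which every $g_j^*$ acts as the identity. On the other hand, with the normalisation $L_i=L_{g_i}$, the construction of the characters in \ref{phi} gives $g_j^*(L_1\cdots L_{n-1})=\chi_1(g_j)\cdots\chi_{n-1}(g_j)(L_1\cdots L_{n-1})=\lambda_j(L_1\cdots L_{n-1})$. Comparing coefficients after applying $g_j^*$ to the top intersection forces $\lambda_j c_{js}=1$, hence $c_{js}=\lambda_j^{-1}$ (remarkably independent of $s$).

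Taking $j=1$ gives $g_1^*L_{g_s^{-1}}=\lambda_1^{-1}L_{g_s^{-1}}$ with $\lambda_1^{-1}<1$. Invoking once more the uniqueness statement in Lemma \ref{Lsq}, a nonzero nef class on which $g_1^*$ acts by a scalar strictly less than $1$ must be parallel to $L_{g_1^{-1}}$; this yields the lemma. I do not foresee a serious obstacle: the only delicate point is that the argument requires the honest abelian action of $H$ on $H^{1,1}(X)$ supplied by Proposition \ref{split}, rather than mere commutation modulo $N(G)$, so that $g_j^*L_{g_s^{-1}}$ is a genuine $g_s^*$-eigenvector with the precise eigenvalue $\lambda_s^{-1}$ and the uniqueness clause of Lemma \ref{Lsq} can be applied verbatim.
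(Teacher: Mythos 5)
Your proof is correct and follows essentially the same route as the paper: both arguments use Proposition \ref{split} to make the $g_i^*$ commute on $H^{1,1}(X)$, turn the $L_{g_i^{\pm 1}}$ into common $H$-eigenvectors via the uniqueness clause of Lemma \ref{Lsq}, determine the unknown eigenvalue by applying a group element to the nonzero top intersection of Lemma \ref{newL}(8), and conclude parallelism from Lemma \ref{Lsq} again. The only (cosmetic) difference is symmetry of roles: you compute the $g_1^*$-eigenvalue of $L_{g_s^{-1}}$, whereas the paper computes the $g_s^*$-eigenvalue of $L_{g_1^{-1}}$.
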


\begin{proof}
Since $L_{g_i^{-1}}$ is unique up to a multiple (cf. Lemma \ref{Lsq}) and is parallel to
$L_{g_i^{-t}}$ for any $t > 0$, we may assume that
$G = N(G) \, H$ as in Proposition \ref{split}.
Since $g_i \, | H^{1,1}(X) \in H \, | \, H^{1,1}(X)$ ($1 \le i \le n-1$) are commutative to each other,
we may assume that both $L_{g_i}$ and $L_{g_i^{-1}}$ are common nef eigenvectors of $H$.
Set $L := L_{g_1^{-1}}$ and write $g_s^* L = \mu_s L$ with
$\mu_1 = 1/d_1(g_1^{-1}) = 1/d_1(g_1) = 1/\lambda_1$ (cf. Lemma \ref{newL}).
Applying $g_s^*$ to the nonzero scalar $L_{g_1} \dots L_{g_{n-1}} . L$ in $N^n(X) \cong \R$
in Lemma \ref{newL}, we get
$$L_{g_1} \dots L_{g_{n-1}} . L = (d_1(g_s) \, \mu_s) (L_{g_1} \dots L_{g_{n-1}} . L).$$
Hence $1 = d_1(g_s) \, \mu_s$ and $\mu_s = 1/d_1(g_s) = 1/d_1(g_s^{-1})$. Thus $(g_s^{-1})^*L = d_1(g_s^{-1}) L$
and the lemma follows (cf. Lemma \ref{Lsq}).
\end{proof}

\begin{setup}\label{Pfprimelt}
{\bf Proof of Proposition \ref{primelt}}
\end{setup}

Since $G \, | \, H^{1,1}(X)$ is solvable and $Z$-connected
and hence upper-triangularizable by Lie-Kolchin theorem, and
by Proposition \ref{split}, we may assume that $G = H = \langle g_1, \dots, g_{n-1} \rangle$.
By Lemma \ref{Lg1}, $L_1':= L_{g_1^{-1}}$ is a nef common eigenvector of $G$.
As in \cite[Proof of Theorem 1.1; \S 2.2]{Z-Tits}, we may construct a quasi-nef sequence
$L_1'\dots L_k'$ ($1 \le k \le n-1$) so that
$g^*(L_1'\dots L_k') = (\chi_1'(g) \dots \chi_k'(g)) (L_1'\dots L_k')$
and the homomorphism below
$$\begin{aligned}
\varphi': \, G & \, \to \, (\R^{\oplus n-1}, +) \\
g \, &\mapsto \, (\log \chi_1'(g), \dots, \log \chi_{n-1}'(g))
\end{aligned}$$
satisfies
$$\Ker(\varphi') = N(G), \hskip 1pc \Imm(\varphi') \cong \Z^{\oplus n-1} .$$
By Hypothesis (C) (see \ref{proj}),
the projection $\Imm(\log \chi_1') \cong \Z$ and hence is generated by some $\log \mu:= \log \chi_1'(g_b)$.
As in the proof of Lemma \ref{Lg1}, $d_1(g_s) L_1' = (g_s^{-1})^*L_1'= \chi_1'(g_s^{-1}) L_1'= \mu^t L_1'$
with $t \in \Z_{> 0}$ depending on $g_s$.
Hence $d_1(g_s) = d_1(g_b)^t$.
Replacing $g_i$'s by their powers, we may assume that
$\lambda := d_1(g_i)$ is independent of $i \in \{1, \dots, n-1\}$.

For every $g \in G \setminus N(G)$, since $G \, | \, H^{1,1}(X)$ is commutative now,
we may assume that $L_g$ is a nef common eigenvector of $G$.
If $L_{g_1} \dots L_{g_{n-1}} . L_g \ne 0$, then applying $g^*$
we get
$$\chi_1(g) \dots \chi_{n-1}(g) . d_1(g) = 1.$$
If $L_{g_1} \dots L_{g_{n-1}} . L_g = 0$, then as
in Lemma \ref{newL}, \cite[Lemma 4.4]{DS} implies that $d_1(g) = \chi_i(g)$ for some $i$.
Thus, $d_1(g)$ is either $\chi_i(g)$ or $\prod_{j=1}^{n-1} \chi_j(g^{-1})$,
which is an integer power of $\lambda$, if we express $g$ as a product of
powers of $g_i$, use Lemma \ref{newL}(7) and note that $\chi_i(g_j) = 1$ ($i \ne j$).
This proves Proposition \ref{primelt}.

\begin{setup}
With the notation and assumption of Theorem $\ref{ThA}$,
suppose that $c_2(X) \ne 0$ as an element in $N^{n-2}(X)$, i.e.,
as a linear form on $N^1(X) \times \cdots \times N^1(X)$, $(n-2)$ of them
(cf. \cite[page 265-267]{SW}, \cite[Definition 2.4]{CY3}).
Since $X$ is minimal, Miyaoka's pseudo-effectivity of $c_2(X)$ implies that
$c_2(X) \ge 0$ on $\Nef(X) \times \cdots \times \Nef(X)$, $(n-2)$ of them
(cf. \cite[Theorem 4.1]{SW} and the reference therein).
Hence $c_2(X) > 0$ on the self product of the ample cone
$\Amp(X) \times \cdots \times \Amp(X)$, since $N^1(X)$ is spanned by $\Amp(X)$
which is an open cone.

Since $c_2(X) . \Amp(X)$ is a nonzero cone of $N^3(X)$
and is stable under the natural action of $G$, and
since $G \, | \, N^1(X)$ is solvable and $Z$-connected,
there are nef (and indeed ample) divisors $P(t)$ such that
$$c_2(X) . M_1 := \lim_{t \to \infty} c_2(X) . P(t)$$
is nonzero in $N^3(X)$ and a common $G$-eigenvector
(cf. \cite[Theorem 1.1]{KOZ});
note that $M_1 \in N^1(X)$ may {\it not} be nef since the cone $c_2(X) . \Nef(X)$ may not be closed in $N^3(X)$.
By the same reasoning, we can construct $c_2(X) . M_1 . M_2$.
Continuing the process, we get:
\end{setup}

\begin{lemma}\label{c2}
With the notation and assumption of Theorem $\ref{ThA}$,
suppose that $c_2(X) \ne 0$ in $N^2(X)$.
Then we have:
\begin{itemize}
\item[(1)]
There is a sequence $0 \ne c_2(X) . M_1 \dots M_k \in N^{k+2}(X)$ $(1 \le k \le n-3)$
$($called a pseudo-effective sequence$)$
which is positive on the self product of the ample cone $\Amp(X) \times \cdots \times \Amp(X)$ $(n - k - 2$ of them$)$,
such that
$$g^*(c_2(X) . M_1 \dots M_k) = \chi_1'(g) \dots \chi_k'(g) (c_2(X) . M_1 \dots M_k)$$
for all $k$ with characters $\chi_i' : G \to (\R_{> 0}, \times)$.
\item[(2)]
In particular, $C := c_2(X) . M_1 \dots M_{n-3}$ is a nonzero element in the closed cone
$\NE(X)$ of effective $1$-cycles $($which is dual to the nef cone $\Nef(X))$.
\end{itemize}
\end{lemma}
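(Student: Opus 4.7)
The plan is to prove (1) by induction on $k$, modeling the inductive step on the construction of $c_2(X) . M_1$ already sketched in the setup paragraph preceding the lemma; assertion (2) will then follow from (1) at $k = n-3$ by duality between $\Nef(X)$ and $\NE(X)$. The base $k = 0$, where the class is simply $c_2(X)$, is already handled in that paragraph: by assumption $c_2(X) \ne 0$ in $N^2(X)$, Miyaoka's pseudo-effectivity gives $c_2(X) \ge 0$ on $\Nef(X)^{\times (n-2)}$, and a standard multilinear convexity argument (a non-negative multilinear form on a spanning open cone that vanishes at one interior point vanishes everywhere) upgrades non-negativity to strict positivity on $\Amp(X)^{\times (n-2)}$.

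For the inductive step, suppose $D := c_2(X) . M_1 \cdots M_{k-1} \in N^{k+1}(X)$ is a nonzero common $G$-eigenvector with character $\chi_1'\cdots \chi_{k-1}'$ that pairs positively with every tuple in $\Amp(X)^{\times (n-k-1)}$. The cone $D \cdot \Amp(X) \subset N^{k+2}(X)$ is nonzero and $G$-stable, because $G$ permutes $\Amp(X)$ and scales $D$ by a positive character. Since $G \,|\, N^{k+2}(X)$ is solvable and $Z$-connected (being a quotient of the $G$-action on a tensor power of $N^1(X)$), the Perron--Frobenius-type result \cite[Theorem 1.1]{KOZ} applied to the closure of this cone produces ample $P(t)$ such that, after suitable normalization, $D \cdot P(t)$ converges to a nonzero common $G$-eigenvector, which we baptize $c_2(X) . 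M_1 \cdots M_k$; the symbol $M_k$ is purely formal, since the cone $D \cdot \Nef(X)$ need not be closed in $N^{k+2}(X)$. The new character $\chi_k'$ is automatically positive because the eigenvector lies in the closure of the real positive cone $D \cdot \Amp(X)$.

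To propagate positivity to the next step, for any ample $A_1, \dots, A_{n-k-2}$ we have
\[
c_2(X) . M_1 \cdots M_k . A_1 \cdots A_{n-k-2} \; = \; \lim_{t \to \infty} D \cdot P(t) \cdot A_1 \cdots A_{n-k-2} \; \ge \; 0,
\]
each term in the limit being positive by the inductive hypothesis. The limit class is nonzero in $N^{k+2}(X)$ and $\Amp(X)$ spans $N^1(X)$, so this multilinear form cannot vanish identically on $\Amp(X)^{\times (n-k-2)}$; the same convexity argument used in the base case then upgrades non-negativity to strict positivity on the open ample cone, completing the induction.

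For (2), apply (1) with $k = n-3$: the class $C = c_2(X) . M_1 \cdots M_{n-3} \in N^{n-1}(X)$ is nonzero and pairs positively with every ample divisor, hence by continuity non-negatively with every nef divisor, which is exactly the condition $C \in \Nef(X)^{\vee} = \NE(X)$. The main obstacle, flagged by the author in the setup paragraph, is precisely that the $M_i$ cannot in general be taken to be genuine nef classes: since $D \cdot \Nef(X)$ may fail to be closed in $N^{k+2}(X)$, one must carry the positivity condition through the induction as a statement about pairings with tuples in the open ample cone (where spanning/density arguments are available) rather than as nefness of the individual $M_i$, and treat $M_k$ strictly as a book-keeping symbol inside the cycle class.
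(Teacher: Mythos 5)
Your argument is correct and is essentially the paper's own proof: the setup paragraph preceding the lemma already carries out your base step (Miyaoka's pseudo-effectivity plus the openness and spanning of $\Amp(X)$ to get strict positivity) and the first application of \cite[Theorem 1.1]{KOZ} to the nonzero $G$-stable cone $c_2(X).\Amp(X)$, and the paper obtains the lemma by exactly the iteration you describe, treating the $M_i$ as formal symbols since the cone $c_2(X).M_1\cdots M_{k-1}.\Nef(X)$ need not be closed. Assertion (2) is likewise read off from the case $k=n-3$ via the duality between $\NE(X)$ and $\Nef(X)$, so your proposal coincides with the intended proof.
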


\begin{setup}
{\bf Proof of Theorem \ref{ThA}}
\end{setup}

The proof will almost fill up the rest of the paper.
We may and will freely replace
$G$ by its finite-index subgroups.
We may assume that $G = N(G) \, H$ with $H = \langle g_1, \dots, g_{n-1} \rangle$
so that $H \, | \, \NS_{\C}(X) \cong \Z^{\oplus n-1}$
as in Theorem \ref{ThB} and satisfies the four
assertions there but with $H^{1,1}(X)$ replaced by $\NS_{\C}(X)$ (cf. Remark \ref{rThB}).
We use the notation in the proof of Theorem \ref{ThB} and let
$$A_H := L_{g_1} + \dots + L_{g_{n-1}} + L_{g_1^{-1}}$$
be the nef and big divisor,
where $L_{g_j^{\pm}} \in \Nef(X)$ can be chosen to be common eigenvectors of $H$
since $H \, | \, \NS_{\R}(X)$ is commutative.

\begin{lemma}\label{Hfix}
\begin{itemize}
\item[(1)]
Let $D \in N^s(X)$ $(0 < s < n)$
such that $h^*D = D$
for all $h \in H$. Then $D . A_H^{n-s} = 0$.
\item[(2)]
In particular, for the Chern classes $c_i(X)$ $(i = 1, 2)$,
we have $c_i(X) . A_H^{n-i} = 0$.
\item[(3)]
Hence $K_X \sim_{\Q} 0$, i.e.,
a positive multiple of $K_X$ is linearly equivalent to zero.
\end{itemize}
\end{lemma}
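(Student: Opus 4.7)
The plan is to establish (1) by a character-theoretic calculation on the multinomial expansion of $A_H^{n-s}$, derive (2) immediately via the $\Aut(X)$-invariance of Chern classes, and pass from (2) to (3) through a Hodge-index argument combined with abundance for numerically trivial $K_X$.

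For (1), expand $A_H^{n-s}$ multinomially. Since $L_{g_k}^2 = L_{g_k^{-1}}^2 = 0$ in $H^{2,2}(X)$ by Lemma \ref{Lsq}, only square-free monomials $M = \prod_{i \in S} L_{g_i} \cdot L_{g_1^{-1}}^{\epsilon}$ survive, indexed by $S \subseteq \{1, \dots, n-1\}$ and $\epsilon \in \{0, 1\}$ with $|S| + \epsilon = n - s$. Each such $M$ is a common $H$-eigenvector whose character $\chi_M$ on $g_k$ is determined by $g_k^* L_{g_j} = \chi_j(g_k) L_{g_j}$ (where $\chi_j(g_k) = \lambda_j$ if $j = k$ and $1$ otherwise, from \ref{phi}) together with $g_k^* L_{g_1^{-1}} = \lambda_k^{-1} L_{g_1^{-1}}$ (Lemma \ref{Lg1}). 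A short case check gives: when $\epsilon = 0$, $\chi_M(g_k) = \lambda_k$ for any $k \in S$, and $|S| = n - s \ge 1$ guarantees such $k$ exists; when $\epsilon = 1$, $\chi_M(g_k) = \lambda_k^{-1}$ for any $k \notin S$, and $|S| = n - s - 1 \le n - 2$ guarantees such $k$ exists. Thus for $0 < s < n$, every surviving $M$ has some $g_k$ with $\chi_M(g_k) \ne 1$. Since $D \cdot M \in N^n(X) \cong \R$ is $H$-fixed (trivial $H$-action on $N^n(X)$, together with the $H$-fixedness of $D$) while $g_k^*(D \cdot M) = \chi_M(g_k)(D \cdot M)$, we force $D \cdot M = 0$, and summing yields $D \cdot A_H^{n-s} = 0$.

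For (2), the Chern classes $c_i(X) \in N^i(X)$, defined in the canonical-singularity sense (cf. \cite[\S 2]{CY3}, \cite[pp.~265-267]{SW}), are $\Aut(X)$-invariant by naturality, hence $H$-fixed; part (1) (with $0 < i < n$ valid since $n \ge 3$) then gives $c_i(X) \cdot A_H^{n-i} = 0$ for $i = 1, 2$. For (3), $c_1(X) = -K_X$ yields $K_X \cdot A_H^{n-1} = 0$. Apply Kodaira's lemma: $mA_H \sim A + E$ for some $m > 0$ with $A$ an ample $\Q$-divisor and $E$ effective; expanding $(A + E)^{n-1}$ and using nefness of $K_X$ and $A$ together with restriction to prime components of $E$ (inducting on dimension) shows the mixed terms are non-negative, so $K_X \cdot A^{n-1} \le m^{n-1} K_X \cdot A_H^{n-1} = 0$, forcing $K_X \cdot A^{n-1} = 0$. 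A generalized Hodge index argument on $(N^1(X)_\R, (-,-)\cdot A^{n-2})$ (signature $(1, \rho - 1)$) then gives $K_X \equiv 0$ numerically. Finally, for the minimal variety $X$ with canonical singularities and $K_X \equiv 0$, abundance in the Kodaira-dimension-zero case (via Kawamata's non-vanishing or the relevant consequence of \cite[Theorem 3.9.1]{BCHM}) upgrades this to $K_X \sim_\Q 0$.

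The main obstacle is the final step, passing from $K_X \equiv 0$ to $K_X \sim_\Q 0$, which in arbitrary dimension invokes abundance machinery; the character calculation in (1) is elementary linear algebra, but it must be tracked carefully across the two monomial shapes $\epsilon \in \{0, 1\}$ to ensure no surviving monomial has trivial character in the relevant range $0 < s < n$.
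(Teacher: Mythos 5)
Your parts (1) and (2) are correct and essentially the paper's argument: the paper also reduces to monomials $M = L_1^{i_1}\cdots L_n^{i_n}$ in the expansion of $A_H^{n-s}$, each a common $H$-eigenvector with character $e(h)=\prod_k\chi_k(h)^{i_k}$, and shows $e\not\equiv 1$ unless $s=n$, forcing $D\cdot M=0$. The only difference is that the paper does not invoke Lemma \ref{Lsq} to discard non-square-free monomials; instead it handles all exponents at once, using $\chi_1\cdots\chi_n=1$ and the fact that $\Imm(\varphi)$ spans a rank-$(n-1)$ lattice, which forces $i_1=\cdots=i_n$ and hence $i_1=0$, i.e.\ $s=n$. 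Your case check ($\epsilon=0$ versus $\epsilon=1$) is a valid variant, provided you note that Lemma \ref{Lsq} is being used in its $\NS_{\C}(X)$/$N^2(X)$ incarnation (cf.\ Remark \ref{rThB}(1)), since Lemma \ref{Hfix} lives in the projective, possibly singular, setting of Theorem \ref{ThA}.

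In (3), however, one step fails as written. After writing $mA_H\sim A+E$ with $A$ ample and $E$ effective, you expand $K_X\cdot(A+E)^{n-1}$ and assert that all mixed terms are non-negative "by restriction to prime components of $E$". This is only clear for the terms linear in $E$: a term such as $K_X\cdot A^{n-1-k}\cdot E^{k}$ with $k\ge 2$ involves $E|_{E_i}$, which need not be effective (its self-intersection part can be negative), so the induction-on-dimension restriction argument does not go through and such terms can indeed be negative. The conclusion you want is nevertheless true, and the repair is standard: substitute $A+E$ for $mA_H$ one factor at a time, so that at each stage the discarded term is an intersection of $n-1$ nef classes ($K_X$, copies of $A_H$ and $A$) with the effective cycle $E$, hence non-negative; this gives $0=K_X\cdot A_H^{n-1}\ge m^{-(n-1)}K_X\cdot A^{n-1}\ge 0$, and then your Hodge-index step applies. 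The paper short-circuits all of this by quoting \cite[Lemma 2.2]{nz2} (a nef class with zero intersection against the $(n-1)$-st power of a nef and big class is numerically trivial). Finally, the passage from $K_X\equiv 0$ to $K_X\sim_{\Q}0$ is Kawamata's theorem \cite[Theorem 8.2]{Ka85}, as cited in the paper; \cite[Theorem 3.9.1]{BCHM} is the base-point-free theorem for $\R$-divisors used elsewhere (for the contraction $\tau_H$) and is not the relevant reference here, though your appeal to "abundance in the numerically trivial case" is the right idea.
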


\begin{proof}
Take $M := L_1^{i_1} \cdots L_{n}^{i_n}$ with $\sum_{k=1}^n i_k = n-s$.
For $h \in H$, we have $h^*M = e(h) M$ with $e(h) = \chi_1(h)^{i_1} \cdots \chi_n(h)^{i_n}$.
Since $\R \ni M . D = h^*M . h^*D = e(h) M . D$ and since $A_H^{n-s}$ is a combination of such $M$,
it suffices to show that $e(h) \ne 1$ for some $h \in H$ (so that $M . D = 0$).
Suppose the contrary that $e(h) = 1$ for all $h \in H$.
Taking $\log$ and using $\chi_1 \dots \chi_n = 1$, we have
$(i_1-i_n) \log \chi_1 + \cdots + (i_{n-1}-i_n) \log \chi_{n-1} = 0$ on $H$.
Since the image of the homomorphism $\varphi = (\log \chi_1, \dots, \log \chi_{n-1})$ is a spanning lattice
in $\R^{\oplus n-1}$
of rank $n-1$ (cf. \ref{phi}),
this happens only when $i_1 - i_n = \cdots = i_{n-1} - i_n = 0$.
Thus $n-1 \ge n-s = \sum_{k=1}^n i_k = n i_1$, so $i_1 = 0$ and hence $s = n$.
This is absurd.

Since $X$ is minimal and hence $K_X$ is nef, the vanishing of $K_X . A_H^{n-1}$
and \cite[Lemma 2.2]{nz2} imply that $K_X \equiv 0$ (numerically).
Hence $K_X \sim_{\Q} 0$ (cf. \cite[Theorem 8.2]{Ka85}).
This proves Lemma \ref{Hfix}.
\end{proof}

\begin{lemma}\label{ampleA}
Theorem $\ref{ThA}$ is true when $c_2(X) = 0$ in $N^2(X)$.
\end{lemma}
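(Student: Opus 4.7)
The plan is to combine the two Chern class vanishings $c_1(X) \equiv 0$ (which follows from $K_X \sim_{\Q} 0$ in Lemma \ref{Hfix}(3)) and $c_2(X) = 0$ (the present hypothesis) with the quotient-singularities assumption in order to invoke a characterization of torus quotients. First I would pass to the global index-one cover $X' \to X$, which is quasi-\'etale with $K_{X'} \sim 0$ and still has only quotient singularities; the Chern vanishings persist orbifold-theoretically. Applying the torus-quotient characterization recalled in the introduction (Yau and Beauville, cf.\ \cite{Be}; in dimension three one uses \cite{SW}, which accommodates canonical rather than merely quotient singularities, cf.\ Remark \ref{rThA}(1)) then produces an abelian variety $T$ and a finite group $F$ acting freely outside a finite subset of $T$ with $X \cong T/F$. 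This gives the first half of assertion (4).

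For the lifting in the second half of (4), the cover $T \to X$ is canonical, recoverable via the universal cover of the smooth locus, so a finite-index subgroup $G_1 \le G$ lifts uniquely to an action by automorphisms of $T$. Write each lifted element as a translation composed with a group automorphism of $T$; since translations act trivially on $H^{1,1}(T)$, the representation of $G_1$ on $H^{1,1}(T)$ factors through the integral group of group automorphisms of $T$. A null-entropy element then has image of finite order in this integral representation, so $N(G_1) \, | \, H^{1,1}(T)$ is a periodic integral group, finite by Burnside's theorem as in the argument at the end of Lemma \ref{centre}. Restricting to the $F$-invariant part yields the finiteness of $N(G) \, | \, \NS_{\C}(X)$, which is (1); assertion (2) then follows from Theorem \ref{ThB}(4) combined with (1).

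For (3) the dichotomy hinges on whether $F$ contains non-translation elements of $\Aut(T)$. If $F$ is trivial, then $X = T$ is abelian, $\Aut_0(X) = T$ acts by translations (all of null entropy), and the kernel of $G \to G \, | \, \NS_{\C}(X)$ sits inside $\Aut_0(X) \cap N(G)$; since $N(G) \, | \, \NS_{\C}(X)$ is finite by (1), a finite-index subgroup of $N(G)$ lies in $T$, and its Zariski closure is a $G$-stable abelian subvariety of $T$ — by the minimality of the pair $(X, G)$ this closure must be all of $T$, giving the asserted Zariski-density. If instead $F$ contains a non-translation, then $F$ acts with a fixed point and the $F$-centralizer in $T$ is a proper subtorus; any nontrivial $\Aut_0(X)$-orbit descending from such a subtorus would yield a nontrivial $G$-equivariant contraction of $X$, contradicting the minimality of $(X,G)$. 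Hence $\Aut_0(X)$ is trivial in this case, and (1) then forces $N(G)$ itself to be finite.

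The hard part will be the singular torus-characterization step: for a smooth compact K\"ahler manifold with $c_1 = c_2 = 0$ this is classical Yau--Bogomolov--Beauville, but under our quotient-singularities hypothesis one must work orbifold-theoretically through the index-one cover and verify vanishing of the relevant orbifold Chern numbers; in dimension three, \cite{SW} provides the cleanest input and justifies Remark \ref{rThA}(1). Once $X \cong T/F$ is in hand, the lifting of the $G$-action and the analysis of $N(G)$ via translations versus group automorphisms of $T$ are essentially formal.
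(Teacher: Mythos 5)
Your overall skeleton (use $c_1\equiv 0$ from Lemma \ref{Hfix}, the hypothesis $c_2(X)=0$, and the quotient-singularity assumption to invoke the Yau--Beauville/Shepherd-Barron--Wilson characterization, obtain $X\cong T/F$, lift $G$, then analyze $N(G)$) matches the paper's, but the group-theoretic core of your argument contains a genuine error. You claim that a null-entropy element of the lifted group, written as a translation composed with a group automorphism of $T$, ``has image of finite order in this integral representation,'' and you then invoke Burnside to get finiteness of $N(G)\,|\,\NS_{\C}(X)$. This is false: an integral matrix whose eigenvalues all lie on the unit circle need not be of finite order, and indeed there are infinite-order null-entropy group automorphisms of tori, e.g.\ $(x,y)\mapsto (x+y,y)$ on $E\times E$, which act unipotently and nontrivially on $H^{1,1}$. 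The paper's actual mechanism is entirely different: if $N(\widetilde G)\,|\,\NS_{\R}(T)$ were infinite, the unipotent elements would fix a nontrivial subtorus $B<T$, giving a $\widetilde G$-equivariant fibration $T\to T/B$ and forcing $r(G)=r(\widetilde G)\le n-2$ (cf.\ \cite[Proof of Theorem 1.1(3)]{CY3}, \cite[Lemma 2.10]{Z-Tits}), contradicting $r(G)=n-1$. This rank-maximality argument, which also underlies Claim \ref{per}, is the heart of the lemma and is absent from your proposal.

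Two further gaps. First, the torus-quotient characterization does not by itself give $F$ ``acting freely outside a finite set'': triviality of $K_X$ only forces the cover $T\to X$ to be \'etale in codimension one, and quotients such as $\bigl((E_1\times E_2)/\pm 1\bigr)\times E_3$ have positive-dimensional fixed locus while satisfying all the Chern-class hypotheses. The paper obtains finiteness of $T^F$ from Claim \ref{per} (no positive-dimensional $G$-periodic subvariety), again via rank maximality, not from the characterization. Second, in assertion (3) you appeal to the minimality of the pair $(X,G)$ to rule out a proper $G$-stable subtorus or a nontrivial $\Aut_0(X)$; but minimality of $(X,G)$ concerns birational contractions to varieties with isolated canonical singularities, whereas a $G$-stable subtorus yields an equivariant \emph{fibration}, which is excluded by the rank condition (\cite[Lemma 2.14]{Z-Tits}), not by minimality; the paper's route to ``$X$ is abelian'' in the infinite-$N_0$ case also passes through the Albanese morphism being an isomorphism, using Claim \ref{per}. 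Finally, the lifting of a finite-index subgroup of $G$ to $T$ is not automatic: the extension $1\to F\to\widetilde G\to G\to 1$ must be split over a finite-index subgroup, which the paper does via Lemma \ref{modfin} using the finiteness of $N(\widetilde G)$ and the structure $G/N(G)\cong\Z^{\oplus n-1}$; your ``lifts uniquely'' assertion skips this.
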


We prove the lemma. Set $A := A_H$.
The vanishing of $c_2(X)$ in $N^2(X)$
implies the vanishing of the orbifold second Chern class of $X$ (cf. \cite[Proposition 1.1]{SW}).
This and the vanishing of $c_1(X)$ in Lemma \ref{Hfix} imply the existence of
a finite surjective morphism $T' \to X$ from an abelian variety $T'$, based on a deep result of S. T. Yau
(cf. \cite{Be}, \cite[Theorem 7.6]{CZ}).
This is the place we need the singularities of $X$ to be of quotient type
(cf. also Remark \ref{rThA}).

Since $K_{T'} \sim 0 \sim_{\Q} K_X$,
the map $T' \to X$ is \'etale in codimenion one.
Let $T \to X$ be the Galois cover corresponding to the unique maximal lattice $L$
in $\pi_1(X \setminus \Sing X)$ so that $T$ is an abelian variety.
Then $X = T/F$ with
$$F = \pi_1(X \setminus \Sing X)/L = \Gal(T/X)$$
and there is an exact sequence
$$1 \to F \to \widetilde{G} \overset{\gamma}\to G \to 1$$
where $\widetilde{G}$ (the lifting of the original $G$) acts faithfully on $T$
(cf. \cite[\S 3, especially Proof of Prop 3]{Be} applied to
\'etale-in-codimension-one covers, also \cite[Prop 3.5]{nz2}).

By \cite[Lemma 2.6]{JDG} or \cite[Lemma A.8]{NZ}, $N(\widetilde{G}) = \gamma^{-1}(N(G))$.
Hence
$$\widetilde{G}/N(\widetilde{G}) \cong G/N(G) \cong \Z^{\oplus n-1}, \,\,\,\,\,
r(\widetilde{G}) = r(G) = n-1 .$$
Since $N(\widetilde{G}) \, | \, \NS_{\C}(X)$ is virtually unipotent (cf. Proposition \ref{split}),
we may assume that $\widetilde{G} \, | \, \NS_{\C}(X)$ is solvable and $Z$-connected after
$G$ is replaced by its finite-index subgroup.

If Theorem \ref{ThA}(1) is not true then $N(\widetilde{G}) \, | \, \NS_{\R}(T)$ is also an infinite group,
thus \cite[Proof of Theorem 1.1(3), page 2338]{CY3}
shows the existence of a $\widetilde{G}$-equivariant fibration
$T \to T/B$ with $0 < B < T$ a subtorus of $T$ fixed by the unipotent elements of $\widetilde{G} \, | \, H^{1,1}(T)$;
this leads to rank $r(G) = r(\widetilde{G}) \le n - 2$
(cf. \cite[Proof of Lemma 2.10]{Z-Tits}), contradicting the assumption $r(G) = n-1$.
Hence the assertion (1) of Theorem \ref{ThA} is true.
(2) follows from (1) and Lemma \ref{modfin}.

For the assertion (3), take an ample divisor (or a K\"ahler class for the purpose of later
Theorem \ref{ThC}) $M'$ on $X$.
Then $M := \sum h_t^* M'$, where $h_t$ runs in the
finite group $N(G) \, | \, \NS_{\R}(X)$,
is an ample divisor (or a K\"ahler class) and stable under the action of $N(G)$.
Hence $N(G) \le \Aut_M(X) := \{f \in \Aut(X) \, | \,
f^*M = M\}$, where $|\Aut_M(X) : \Aut_0(X)| < \infty$
by \cite[Proposition 2.2]{Li} or \cite[Theorem 4.8]{Fu}.
Thus for
$$N_0 := N(G) \cap \Aut_0(X)$$
we have
$$N(G) / N_0 \cong (\Aut_0(X) . N(G)) / \Aut_0(X) \le \Aut_M(X) / \Aut_0(X)$$
where the latter is a finite group.
If $N_0$ is finite, then so is $N(G)$ and the first case of the assertion (3) is true (cf. Lemma \ref{modfin}).

Suppose that $N_0$ is infinite. We shall show that the second case of the assertion (3) occurs.
First, $\Aut_0(X) \ne 1$.
If the linear part of $\Aut_0(X)$ is non-trivial, then $X$ is ruled (and hence uniruled)
(cf. \cite[Proposition 5.10]{Fu}), contradicting the assumption that $X$ is a minimal variety with only canonical
singularities and hence non-uniruled by the well-known Miyaoka-Mori uniruledness criterion.

Therefore, the linear part $\Aut_0(X)$ is trivial. Then $\Aut_0(X)$ is a complex torus
and (cf. \cite[Theorem 3.12]{Li} or \cite[Theorem 5.5]{Fu})
$$1 \le \dim \Aut_0(X) \le \dim \Alb(X) = q(X)$$
where $\Alb(X)$ is the Albanese variety of $X$ and $q(X) = h^1(X, \OO_X)$ is the
irregularity of $X$.
By \cite[Lemma 2.13]{Z-Tits} applied to a resolution of $X$
and using \cite[Lemma 8.1]{Ka85}, the albanese map $\alb_X : X \to \Alb(X)$ is a birational and surjective morphism.

\begin{claim}\label{per}
$X$ has no positive-dimensional subvariety $Y'$ which is $G$-periodic
$($i.e., $Y'$ is stabilized by a finite-index subgroup of $G)$.
\end{claim}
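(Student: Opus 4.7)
The plan is to argue by contradiction, using the \'etale-in-codimension-one cover $\pi \colon T \to X = T/F$ constructed earlier in the proof of Lemma \ref{ampleA}, together with Ueno's structure theorem for subvarieties of tori and the rank-drop argument already used to establish Theorem \ref{ThA}(1). Suppose $Y' \subsetneq X$ is a proper positive-dimensional subvariety stabilized by some finite-index subgroup of $G$; after replacing $G$ by that subgroup, $Y'$ is $G$-stable, hence $\pi^{-1}(Y')$ is $\widetilde{G}$-stable in $T$. After a further finite-index replacement of $G$ (and $\widetilde{G}$), I may fix an irreducible $\widetilde{G}$-stable component $\widetilde{Y} \subseteq T$ with $0 < \dim \widetilde{Y} < n$.

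To $\widetilde{Y}$ I would attach two canonical subtori of $T$: the identity component $B \subseteq T$ of the translation stabilizer $\{t \in T : t + \widetilde{Y} = \widetilde{Y}\}$, and the smallest subtorus $T_0 \subseteq T$ such that $\widetilde{Y} \subseteq t_0 + T_0$ for some $t_0$. By Ueno's theorem, the image of $\widetilde{Y}$ in $T/B$ is of general type. Both $B$ and $T_0$ are canonically determined by $\widetilde{Y}$, hence stable under the linear parts of every element of $\widetilde{G}$; since $\widetilde{Y}$ is positive-dimensional and proper in $T$, we have $T_0 \ne 0$ and $B \ne T$.

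If $0 < B < T$ or $0 < T_0 < T$, then the $\widetilde{G}$-equivariant fibration $T \to T/B$ (respectively $T \to T/T_0$) is non-trivial, and as in the proof of Theorem \ref{ThA}(1), invoking \cite[Proof of Lemma 2.10]{Z-Tits}, this forces $r(\widetilde{G}) \le n-2$, contradicting $r(\widetilde{G}) = r(G) = n-1$. The only remaining case is $B = 0$ and $T_0 = T$: then $\widetilde{Y}$ is itself of general type and is not contained in any proper translated subtorus of $T$. Its bimeromorphic automorphism group (applied to a $\widetilde{G}$-equivariant desingularization) is finite, so after one more finite-index replacement $\widetilde{G}$ fixes $\widetilde{Y}$ pointwise. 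Writing each $\widetilde{g} \in \widetilde{G}$ as the affine map $x \mapsto A_g x + b_g$, pointwise-fixing of $\widetilde{Y}$ gives $A_g(y_1 - y_2) = y_1 - y_2$ for all $y_1, y_2 \in \widetilde{Y}$; since the subtorus generated by $\widetilde{Y} - \widetilde{Y}$ equals $T_0 = T$, this forces $A_g = \mathrm{id}$, and then $b_g = 0$. Thus $\widetilde{G}$ becomes trivial, contradicting $r(\widetilde{G}) = n-1 \ge 2$.

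The main obstacle I anticipate is the degenerate case $B = 0$, $T_0 = T$, where Ueno gives general type but one still has to squeeze a contradiction out of the affine action; this requires combining the finiteness of $\Aut$ for general-type varieties with the Zariski-density argument for $\widetilde{Y} - \widetilde{Y}$ inside $T$, and ensuring that the canonical nature of $B$ and $T_0$ is preserved through the repeated finite-index reductions of $\widetilde{G}$.
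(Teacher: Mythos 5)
Your argument is correct, and its skeleton is the same as the paper's: lift $Y'$ to a $\widetilde{G}$-stable irreducible subvariety $\widetilde{Y}$ of the torus cover $T$, produce from it a nontrivial $\widetilde{G}$-equivariant quotient of $T$ by a subtorus, and contradict $r(\widetilde{G})=r(G)=n-1$ via the rank-drop estimate of \cite[Lemma 2.10]{Z-Tits}. The difference is in how the equivariant fibration is obtained: the paper simply cites \cite[Lemma 2.11]{per}, which hands it the $\widetilde{G}$-equivariant homomorphism $T \to T/B$ with $1 \le \dim(T/B) \le n-1$, whereas you reconstruct this input from scratch, using the translation stabilizer $B$, the subtorus $T_0$ generated by $\widetilde{Y}-\widetilde{Y}$ (both preserved by the linear parts $A_g$, so the quotients are indeed $\widetilde{G}$-equivariant), Ueno's fibration theorem, and a separate treatment of the degenerate case $B=0$, $T_0=T$, where $\widetilde{Y}$ is of general type, finiteness of its birational automorphism group forces a finite-index subgroup of $\widetilde{G}$ to fix $\widetilde{Y}$ pointwise, and the affine rigidity $A_g=\mathrm{id}$, $b_g=0$ then makes $\widetilde{G}$ finite, which is incompatible with $\widetilde{G}/N(\widetilde{G})\cong\Z^{\oplus n-1}$. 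In effect you reprove the cited lemma in-line: this buys self-containedness (and sidesteps the paper's preliminary normalization that $\widetilde{G}/(\widetilde{G}\cap\Aut_0(T))$ be free abelian, which is only needed to invoke the citation), at the cost of importing Ueno's theorem and the finiteness of $\Bir$ for general-type varieties; the paper's version is shorter because the degenerate case is absorbed into the reference. Your treatment of the remaining points (lifting $Y'$ to a $\widetilde{G}$-stable component after a finite-index replacement, which does not change the rank, and the observation $B \le T_0$ so the case split is exhaustive) is sound.
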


\begin{proof}
We prove the claim.
We have already proved the finiteness of $N(\widetilde{G}) \, | \, \NS_{\C}(T)$.
Replacing $\widetilde{G}$ by its finite-index subgroup and by Lemma \ref{modfin},
we may assume that
$\widetilde{G} \, | \, \NS_{\C}(T)$
and even $\widetilde{G}/(\widetilde{G} \cap \Aut_0(T))$
are free abelian groups of rank $n-1$.

Suppose a positive-dimensional subvariety $Y' \subset X$ is $G$-periodic.
Then a subvariety $Y \subset T$ (dominating $Y'$) is $\widetilde{G}$-periodic
and is even stabilized by $\widetilde{G}$ after this group is replaced by its finite-index subgroup.
By the proof of \cite[Lemma 2.11]{per}, there is a $\widetilde{G}$-equivariant
homomorphism $T \to T/B$ with $\dim (T/B) \in \{1, \dots, n-1\}$.
Thus the rank $r(\widetilde{G}) \le \dim T - 2 = n-2$ (cf. \cite[Lemma 2.10]{Z-Tits}).
This contradicts the fact that $r(\widetilde{G}) = r(G) = n - 1$.
This proves Claim \ref{per}.
\end{proof}

\par \vskip 1pc
We return to the proof of Lemma \ref{ampleA}.

The action of (the original group) $G$ on $X$ induces an action of $G$ on
the Albanese variety $\Alb(X)$ so that $\alb_X$ is $G$-equivariant,
by the universal property of the albanese variety.
If $\alb_X$ is not an isomorphism, then its exceptional locus (where the map is not isomorphic)
is $G$-periodic and positive-dimensional by Zariski's main theorem,
contradicting Claim \ref{per}.
Thus $\alb_X$ is an isomorphism, and hence $X$ is an abelian variety.
Since $r(G) = n-1$, \cite[Lemma 2.14]{Z-Tits}
implies that the Zariski-closure $\overline{N_0}$ of $N_0$ in the translation group $\Aut_0(X) \cong X$
acts (as translations) on the torus $X$ with a Zariski-dense open orbit
and is hence equal to $\Aut_0(X)$. So the second case of the assertion (3) occurs.

For Theorem \ref{ThA}(4), since $F \lhd \widetilde{G}$,
the fixed locus
$$T^F = \{t \in T \, | \, f(t) = t \,\,\,\, \text{for some} \,\,\,\, \id \neq f \in F\}$$
is $\widetilde{G}$-stable. If this locus is positive-dimensional, then
its image in $X$ is $G$-periodic, contradicting Claim \ref{per}.
If $X$ is not an abelian variety yet, then by the proved assertion (3),
$N(\widetilde{G}) = \gamma^{-1}(N(G))$ is finite.
Applying Lemma \ref{modfin} to
$$(\widetilde{G}/F)/(N(\widetilde{G})/F) \cong G/N(G) \cong \Z^{\oplus n-1}$$
there is a rank $n-1$, free abelian, finite-index subgroup $\widetilde{G_1}/F$ of $\widetilde{G}/F = G$.
Applying Lemma \ref{modfin} to $\widetilde{G_1}/F$,
our $\gamma : \widetilde{G} \to G$ maps a rank $n-1$, free abelian, finite-index subgroup $G_1$ of $\widetilde{G}$
isomorphically onto a subgroup of $\widetilde{G_1}/F \le G$.
This proves Theorem \ref{ThA}(4), Lemma \ref{ampleA} and
the whole of Theorem \ref{ThA}, provided that
$c_2(X) = 0$ in $N^2(X)$.

\par \vskip 1pc
Now Theorem \ref{ThA} follows from the following:

\begin{lemma}\label{c2=0}
$c_2(X) = 0$ in $N^2(X)$.
\end{lemma}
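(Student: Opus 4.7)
The plan is to argue by contradiction from $c_2(X) \ne 0$ in $N^2(X)$, splitting on whether the nef and big class $A_H$ is ample.

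First, suppose $A_H$ is ample. Miyaoka's pseudo-effectivity gives $c_2(X) . L_1 \cdots L_{n-2} \ge 0$ for all nef $L_i$, while Lemma \ref{Hfix}(2) gives $c_2(X) . A_H^{n-2} = 0$. Since $A_H$ lies in the interior of $\Nef(X)$, the perturbations $A_H \pm \epsilon L$ are still nef for any nef $L$ and small $\epsilon > 0$, so the nonnegativity of $c_2(X) . A_H^{n-3} . (A_H \pm \epsilon L)$ forces $c_2(X) . A_H^{n-3} . L = 0$ for every nef $L$. Iterating this perturbation argument yields $c_2(X) . L_1 \cdots L_{n-2} = 0$ for all nef $L_i$, and since $\Nef(X)$ spans $N^1(X)$, multilinearity gives $c_2(X) = 0$ in $N^2(X)$, a contradiction.

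If $A_H$ is not ample, I would invoke the base point free theorem for $\R$-divisors \cite[Theorem 3.9.1]{BCHM}: since $A_H$ is nef and big, $K_X \sim_{\Q} 0$ by Lemma \ref{Hfix}(3), and $(X,0)$ is klt, $A_H$ is semi-ample, defining a non-isomorphic birational contraction $\phi : X \to Y$ onto a normal projective variety. The contracted curves are exactly those $\ell$ with $L_{g_i^{\pm 1}} . \ell = 0$ for every nef summand of $A_H$, and for $h \in H$ the class $h^*A_H$ is a positive combination of the same $L_{g_i^{\pm 1}}$, so $\phi$ is automatically $H$-equivariant. Combining this with the unipotency of $N(G)\,|\,N^1(X)$ from Theorem \ref{ThB}(3) and the uniqueness-up-to-scalar of $L_{g_i^{\pm 1}}$ from Lemma \ref{Lsq}, after replacing $G$ by a finite-index subgroup $G_1$, one upgrades $\phi$ to a $G_1$-equivariant morphism. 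Since $K_X \sim_{\Q} 0$, $\phi$ is crepant, so $K_Y \sim_{\Q} 0$ and $Y$ inherits canonical singularities; the nonzero $G$-semi-invariant class $C = c_2(X) . M_1 \cdots M_{n-3} \in \NE(X)$ from Lemma \ref{c2} must lie in the contracted face (otherwise $A_H$ would be ample), and the rank-maximality $r(G) = n-1$, via an equivariant fibration argument as in Claim \ref{per} applied to $Y$, excludes positive-dimensional $G_1$-periodic subvarieties of $Y$, forcing the image of the exceptional locus to be a finite set. Thus $Y$ has only isolated canonical singularities, and the resulting $G_1$-equivariant non-isomorphic birational morphism $X \to Y$ contradicts the minimality of $(X,G)$ in the sense of \ref{conv}.

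The principal obstacle I expect is this second case. The BCHM contraction alone only yields a normal projective target with canonical singularities, and promoting to \emph{isolated} canonical singularities as required by \ref{conv} demands both the full $G_1$-equivariance of $\phi$ (which must handle the unipotent $N(G)$-action in addition to the easy abelian $H$-action) and a rank-based exclusion of positive-dimensional $G_1$-periodic loci in $Y$; these two ingredients need to be woven together carefully using the uniqueness results of Lemma \ref{Lsq} and Lemma \ref{newL}.
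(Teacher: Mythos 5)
Your Case 1 (when $A_H$ is ample) is fine: the perturbation of $A_H$ inside the open ample cone combined with Miyaoka's pseudo-effectivity does force the multilinear form $c_2(X)$ to vanish, and this is consistent with the paper's remark that the difficulty arises precisely because $A_H$ need not be ample. The problem is Case 2, which is the actual content of the lemma, and there your argument has two genuine gaps. First, the claimed upgrade of the $F_H$-contraction $\phi$ to a $G_1$-equivariant morphism for a finite-index subgroup $G_1 \le G$ is unjustified. At this stage of the argument one cannot assume $N(G) \, | \, N^1(X)$ is finite (that is Theorem \ref{ThA}(1), whose proof needs $c_2(X)=0$ first); it is only virtually unipotent, hence possibly infinite and torsion-free, so no finite-index subgroup of $N(G)$ acts trivially on $N^1(X)$ unless the action is already trivial. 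For $u \in N(G)$ the classes $u^*L_{g_i}$ are eigenvectors of the conjugate group $H_u = u^{-1}Hu$, not of $H$, and there is no reason that $u^*F_H = F_H$; unipotency plus the uniqueness of $L_{g_i^{\pm 1}}$ up to scalar (Lemma \ref{Lsq}) does not repair this, since uniqueness is uniqueness as an $H$-eigenvector, not as an $H_u$-eigenvector. The paper circumvents exactly this obstruction by intersecting the conjugate faces, $F_G = \bigcap_{u \in N(G)} u^*F_H = \bigcap_{i=1}^t u_i^*F_H$, which is genuinely $G$-stable, and contracting $F_G$ rather than $F_H$; the minimality of $(X,G)$ is then used to conclude $F_G = 0$ (Claim \ref{FG}), not to contradict the existence of an $H$-equivariant contraction. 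Since $H$ need not have finite index in $G$, your $\phi$ simply does not violate the minimality condition of \ref{conv}.

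Second, your assertion that the cycle $C = c_2(X) . M_1 \cdots M_{n-3}$ ``must lie in the contracted face (otherwise $A_H$ would be ample)'' is a non sequitur: $C . A_H > 0$ is perfectly compatible with $A_H$ being non-ample. Establishing $C . A_H = 0$ is in fact the hardest step of the paper's proof; it cannot be read off from $c_2(X) . A_H^{n-2} = 0$ because the $M_i$ in the pseudo-effective sequence of Lemma \ref{c2} need not be nef, and the Case 1 perturbation is unavailable since $A_H$ is not interior to $\Nef(X)$. The paper gets it by a Hironaka blowup $\sigma : Z \to X$ with centre in the exceptional locus of $\tau_H$, writing $\sigma^*A_H - E = \sigma^*M_i + P_i$ with $P_i$ ample, and running a chain of inequalities based on Miyaoka's pseudo-effectivity, the projection formula and $\sigma^*A_H . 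E = 0$, sandwiching $C . A_H$ between $0$ and $c_2(X) . A_H^{n-2} = 0$. Only then does the final contradiction appear, and it is of a different shape from yours: since $C$ is a common $G$-eigenvector, $C . u_i^*A_H = 0$ for all $i$, so $0 \ne C \in F_G = 0$. In short, your proposal is missing both the conjugate-face device needed for genuine $G$-equivariance and the blowup computation that places $C$ in the contracted face; without these the contradiction does not go through.
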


We prove the lemma.
Suppose the contrary that $c_2(X) \ne 0$ in $N^2(X)$.
We shall show that this contradicts the minimality assumption on the pair $(X, G)$.
There is an extremal birational contraction $\tau = \tau_H: X \to X_H$
corresponding to the
rational polyhedral face
$$F_H := \{\alpha \in \NE(X) \, | \, A_H . \alpha = 0\}$$
of the closed cone of effective
curves $\NE(X)$ (cf. \cite[Proof of Theorem 3.9.1]{BCHM}),
so that $\tau(C)$ is a point for a curve $C$ on $X$ if and only if the class $[C] \in F_H$,
and $A_H = \tau^* A_H'$ (resp. $L_{g_i} = \tau^*L_{g_i}'$) for an ample $\R$-divisor $A_H'$
(resp. a nef $\R$-divisor $L_{g_i'}$);
by \cite[Proof of Claim 2.11]{CY3}, $X_H$ has only canonical singularities and
$K_{X_H} \sim 0$, because the same assertions are true for $X$.
Since $A_H$ is the sum of nef $H$-eigenvectors, $F_H$ is $H$-stable.
So the extremal contraction $\tau$ is $H$-equivariant.

\begin{claim}\label{Hper}
$X_H$ has no $H$-periodic subvariety $D$ of dimension $s$ in $\{1, \dots, n-1\}$.
Hence
every $H$-periodic subvariety of $X$ $($especially
the exceptional locus of $\tau_H)$ is
contracted to a point(s) by $\tau_H$.
\end{claim}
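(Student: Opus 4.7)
The plan is to transplant the computation of Lemma~\ref{Hfix}(1) from $X$ down to $X_H$, exploiting the fact that on $X_H$ the class $A_H'$ is \emph{ample} (not merely nef and big). First, after replacing $H$ by a finite-index subgroup (harmless by Lemma~\ref{modfin} since this preserves the rank-$(n-1)$ lattice image of $\varphi$), I may assume that $H$ genuinely stabilizes the hypothetical $D\subset X_H$, so $h^*[D]=[D]$ for every $h\in H$. Because $\tau_H$ is $H$-equivariant and $L_{g_i}=\tau_H^*L_{g_i}'$, $L_{g_1^{-1}}=\tau_H^*L_{g_1^{-1}}'$, each $L_{g_i}'$ and $L_{g_1^{-1}}'$ on $X_H$ is a nef common $H$-eigenvector carrying exactly the same character $\chi_i$ as its upstairs counterpart, and the relation $\chi_1\cdots\chi_n=1$ persists.

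Now suppose $\dim D=s$ with $1\le s\le n-1$. Ampleness of $A_H'=L_{g_1}'+\dots+L_{g_{n-1}}'+L_{g_1^{-1}}'$ gives $(A_H')^s\cdot D>0$, so after multinomial expansion some monomial $M':=(L_{g_1}')^{i_1}\cdots(L_{g_{n-1}}')^{i_{n-1}}(L_{g_1^{-1}}')^{i_n}$ with $\sum_k i_k=s$ satisfies $M'\cdot D\ne 0$. Applying $h\in H$ yields
$$M'\cdot D=(h^*M')\cdot(h^*D)=\Bigl(\prod_{k=1}^{n}\chi_k(h)^{i_k}\Bigr)\,M'\cdot D,$$
forcing $\prod_k\chi_k^{i_k}\equiv 1$ on $H$. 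Taking logarithms and substituting $\log\chi_n=-\sum_{k<n}\log\chi_k$ reduces this to $\sum_{k=1}^{n-1}(i_k-i_n)\log\chi_k\equiv 0$ on $H$. Since the image of $\varphi=(\log\chi_1,\dots,\log\chi_{n-1})$ restricted to $H$ is a lattice of full rank $n-1$ in $\R^{n-1}$ (see \ref{phi}), the characters $\log\chi_1,\dots,\log\chi_{n-1}$ are $\R$-linearly independent on $H$, forcing $i_1=\dots=i_n$ and hence $s=n\,i_n\in\{0,n\}$, which contradicts $1\le s\le n-1$.

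The second assertion is then immediate from the $H$-equivariance of $\tau_H$: any positive-dimensional $H$-periodic proper subvariety $Y\subsetneq X$ has $\tau_H(Y)\subset X_H$ $H$-periodic with $\dim\tau_H(Y)\le\dim Y\le n-1$, so the first assertion forces $\dim\tau_H(Y)=0$. Applied componentwise to the exceptional locus of $\tau_H$, this gives the stated conclusion. The only real subtlety lies in the bookkeeping across several reductions to finite-index subgroups of $H$—each such passage must preserve both the stabilization of $D$ and the full-rank structure of $\varphi|_H$, which is exactly what Lemma~\ref{modfin} is designed to guarantee; the intersection-theoretic core of the argument is then just the character computation modeled on Lemma~\ref{Hfix}(1), combined with the crucial new input that $A_H'$ is ample on $X_H$.
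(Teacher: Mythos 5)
Your proposal is correct and follows essentially the same route as the paper: descend the eigencharacters via $\tau_H^*$ (using $L_{g_i}=\tau_H^*L_{g_i}'$ and $H$-equivariance), rerun the character computation of Lemma~\ref{Hfix}(1) on $X_H$ to show every monomial in $(A_H')^s$ kills $D$, and contradict the positivity $(A_H')^s\cdot D>0$ coming from ampleness of the $\R$-divisor $A_H'$ (Nakai, as generalized by Campana--Peternell). The finite-index bookkeeping you flag is harmless, exactly as you say, since the full-rank image of $\varphi|_H$ persists under passage to finite-index subgroups.
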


\begin{proof}
We prove the claim.
Since $\tau : X \to X_H$ is $H$-equivalent and $L_{g_i} = \tau^*L_{g_i}'$,
our $L_{g_i}'$ and $L_{g_i}$ give rise to the
same character $\chi_i$ on $H$.
Now the proof of Lemma \ref{Hfix}
shows that $(A_H')^{s} . D = 0$.
Since $A_H'$ is ample, this contradicts Nakai's ampleness criterion
(generalized to $\R$-divisors by Campana and Peternell).
This proves Claim \ref{Hper}.
\end{proof}

\par \vskip 1pc
We return back to the proof of Lemma \ref{c2=0}

Take $u \in N(G)$ and set $H_u := u^{-1}Hu$.
Then $G = N(G) \, H_u$ and it satisfies the four assertions of Theorem \ref{ThB} (with $H$ replaced by $H_u$).
Set $g_i' := u^{-1} g_i u$ so that $H_u = \langle g_1', \dots, g_{n-1}' \rangle$.
Since $(g_i')^* (u^*L_{g_i}) = u^* g_i^* (u^{-1})^* (u^*L_{g_i}) = d_1(g_i) u^*L_{g_i}$,
we may take $L_{g_i'} = u^*L_{g_i}$ and similarly $L_{g_i^{-1}} = u^* L_{g_1^{-1}}$ (cf. Lemma \ref{Lg1})
which are all nef common $H_u$-eigenvectors.
Set
$$A_{H_u} = L_{g_1'} + \dots + L_{g_{n-1}'} + L_{{g_1'}^{-1}}, \hskip 1pc
u^* F_H := \{u^*(\alpha) \, | \, \alpha \in F_H\} .$$
Then $A_{H_u} = u^*A_H$,  and
$$u^*F_H = F_{H_u} := \{\alpha \in \NE(X) \, | \, A_{H_u} . \alpha = 0\}.
$$
Since each $u^*F_H$ is spanned by finitely many extremal rays in the cone $\NE(X)$,
there are finitely many $u_i \in N(G)$ ($i = 1, \dots, t$) such that
$$\begin{aligned}
F_G :&= \cap_{g \in G} \, g^*F_H = \cap_{u \in N(G)} \, u^*F_H
= \cap_{i=1}^t \, u_i^*F_H  \\
&= \{\alpha \in \NE(X) \, | \, A_{H_{u_i}}  . \alpha = 0, \, 1 \le i \le t\}
= \{\alpha \in \NE(X) \, | \, \sum_{i=1}^t A_{H_{u_i}}  . \alpha = 0\}
\end{aligned}
$$
where the second equality is true because $G = N(G) \, H$ and $A_H$ is the sum
of nef $H$-eigenvectors.

\begin{claim}\label{FG}
$F_G = 0$.
\end{claim}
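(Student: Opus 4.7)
\emph{Plan of proof.} I argue by contradiction: assume $F_G \neq 0$ and construct a non-isomorphic $G$-equivariant birational morphism $X \to X_B$ onto a normal projective variety with isolated canonical singularities, contradicting the minimality of $(X, G)$ in \ref{conv}.

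First, $F_G$ is a $G$-stable rational polyhedral face of $\NE(X)$, equal to $B^\perp \cap \NE(X)$ for the nef and big $\R$-divisor $B := \sum_{i=1}^t A_{H_{u_i}}$. Since $K_X \sim_\Q 0$ by Lemma \ref{Hfix}(3), Kawamata basepoint freeness \cite[Theorem 3.9.1]{BCHM} makes $B$ semi-ample, defining a birational morphism $\tau_B : X \to X_B$ with $\tau_B^* B' = B$ for an ample $\R$-divisor $B'$ on $X_B$ and contracted curves exactly those with class in $F_G$. As $F_G$ is $G$-stable, $\tau_B$ is $G$-equivariant, and $X_B$ has canonical singularities (crepant contraction from $K_X \sim_\Q 0$). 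Each nef $H$-eigenvector $L_{g_j}$ satisfies $L_{g_j} \cdot F_G = 0$ (since $A_H = \sum_j L_{g_j}$ vanishes on $F_G$ and each $L_{g_j}$ is nef), so descends to a nef class $L_{g_j}''$ on $X_B$ with $B' = \sum_{i, j} u_i^* L_{g_j}''$.

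To obtain the contradiction with minimality, I show $X_B$ has only isolated singularities. Since $\Sing(X_B)$ is $G$-stable, it suffices to prove an analog of Claim \ref{Hper}: $X_B$ contains no positive-dimensional $G$-periodic subvariety. So take $D \subset X_B$ of dimension $s \geq 1$, $G_1$-invariant for a finite-index $G_1 \leq G$, and prove $(B')^s \cdot D = 0$, contradicting Nakai--Moishezon for the ample $B'$. Expand $(B')^s$ into monomials $M = \prod_k u_{i_k}^* L_{g_{j_k}}''$ of total degree $s$. For $h \in H$ and $u \in N(G)$, $N(G) \lhd G$ gives $uh = h(h^{-1}uh)$, and combined with $h^* L_{g_j}'' = \chi_j(h) L_{g_j}''$:
\[
h^*(u^* L_{g_j}'') \;=\; \chi_j(h) \, (h^{-1} u h)^* L_{g_j}''.
\]
Hence $h$ acts on $M$ by the $H$-character $\chi_J := \prod_k \chi_{j_k}$ composed with the conjugation $u_{i_k} \mapsto h^{-1} u_{i_k} h$ on $\{u_1, \dots, u_t\}$. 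Passing to the finite-index subgroup $H_0 \leq H$ centralizing every $u_i$, each $M$ becomes a common $H_0$-eigenvector with character $\chi_J$; exactly as in Lemma \ref{Hfix}(1), the rank-$(n-1)$ property of $\Imm(\varphi)$ from \ref{phi} forces $\chi_J$ to be non-trivial on $H_0$ whenever $s < n$. Thus $M \cdot D = h^* M \cdot h^* D = \chi_J(h)\, M \cdot D$ gives $M \cdot D = 0$, and summing over all monomials yields $(B')^s \cdot D = 0$.

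The main obstacle is verifying the finite index of the centralizer $H_0$ in $H$, i.e., that $H$-conjugation has finite orbit through each $u_i \in N(G)$. This should follow from the interaction between the virtually free abelian action of $H$ and the virtually unipotent action of $N(G)$ on $\NS_\C(X)$ (Proposition \ref{split}), possibly after first enlarging $\{u_1, \dots, u_t\}$ to an $H$-conjugation-closed finite subset of $N(G)$ that still defines $F_G$, and then replacing $H$ by a further finite-index subgroup to achieve the required centralizer property.
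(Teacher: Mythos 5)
Your overall strategy is the same as the paper's: contract the $G$-stable face $F_G$ via \cite[Theorem 3.9.1]{BCHM} (using $K_X\sim_{\Q}0$) to get a non-isomorphic $G$-equivariant birational morphism onto a variety with canonical singularities, then show the target has no positive-dimensional $G$-periodic subvariety (so its singularities are isolated), contradicting the minimality of $(X,G)$. The construction of $\tau_B$, the descent of the nef eigenvectors, and the Nakai set-up are all in line with the paper. The problem is the step you yourself flag as the ``main obstacle'': your treatment of the mixed monomials $\prod_k u_{i_k}^*L_{g_{j_k}}$ requires a finite-index subgroup $H_0\le H$ whose elements commute (at least on $\NS_{\C}(X)$) with every $u_i$, equivalently that the $H$-conjugation orbits through the $u_i\in N(G)$ are finite. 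Nothing in Proposition \ref{split} gives this: at this stage of the argument $N(G)\,|\,\NS_{\C}(X)$ is only known to be unipotent and may be infinite, and in a group $U\rtimes\Z^{\oplus n-1}$ with $U$ infinite unipotent the conjugation action of the free abelian part on $U$ typically has infinite orbits (the finiteness of $N(G)\,|\,\NS_{\C}(X)$ is exactly Theorem \ref{ThA}(1), which is proved only \emph{after} $c_2(X)=0$ is established, so it cannot be invoked here). Enlarging $\{u_1,\dots,u_t\}$ to a conjugation-closed finite set presupposes the same finiteness. So as written the argument has a genuine gap at its decisive point.

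The gap is avoidable, and the paper's (terse) proof of Claim \ref{FG} indicates how: never form mixed monomials, but apply the Claim \ref{Hper} argument to one conjugate $H_{u_i}$ at a time. Since a positive-dimensional $G$-periodic subvariety $D\subset X_G$ pulls back to a positive-dimensional $G$-periodic subvariety $W\subset X$ (a component of $\tau_G^{-1}(D)$ dominating $D$), and $W$ is in particular $H_{u_i}$-periodic for each $i$, the character argument of Lemma \ref{Hfix} applied to the single group $H_{u_i}$ with its own eigenvectors $u_i^*L_{g_j}$ shows (as in Claim \ref{Hper}) that $W$ is contracted to a point by $\tau_{H_{u_i}}$, i.e.\ every curve in $W$ has class in $u_i^*F_H$. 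As this holds for all $i$, every curve in $W$ lies in $F_G=\cap_i u_i^*F_H$, so $\tau_G(W)=D$ is a point (equivalently, $A_G'\cdot \tau_{G*}C=0$ for a curve $C\subset W$ not contracted by $\tau_G$, contradicting the ampleness of $A_G'$). This uses only single-$H_{u_i}$ eigenvector monomials, for which the character $\chi_J$ is genuinely defined and non-trivial, and no commutation between $H$ and the $u_i$ is needed. Replacing your mixed-monomial computation of $(B')^s\cdot D$ by this curve-class argument closes the gap and brings your proof in line with the paper's.
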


\begin{proof}
We prove the claim.
Suppose the contrary that $F_G \ne 0$.
Note that $F_G$ is $G$-stable.
As argued before Claim \ref{Hper}, there is a non-isomorphic $G$-equivariant
birational extremal contraction $\tau_G : X \to X_G$ so that $\tau_G(C)$ is a point
for a curve $C$ on $X$ if and only if the class $[C] \in F_G$;
$$A_G := \sum_{i=1}^t A_{H_{u_i}} = \tau_G^* A_G', \hskip 1pc
L_{g_i'} = \tau_G^*L_{g_i'}'$$
for some ample divisor $A_G'$ and nef divisor $L_{g_i'}'$ on $X_G$
($L_{g_i'}$, w.r.t. every given $u$, being defined preceding this claim)
and $X_G$ has at worst canonical singularities.
The same proof of Claim \ref{Hper} (applied to all $H_u$) and the ampleness of $A_G'$
show that $X_G$ has no positive-dimensional $G$-periodic subvariety
and hence $\dim \Sing X_G \le 0$.
This contradicts the minimality of the pair $(X, G)$.
Therefore, $F_G = 0$ and Claim \ref{FG} is true.
\end{proof}

\par \vskip 1pc
We return to the proof of Lemma \ref{c2=0}.
Let $\sigma : Z \to X$ be some Hironaka's blowup with centre in the exceptional
locus of $\tau = \tau_H : X \to X_H = :Y$, such that
$-E$ is $\tau \sigma$-relatively ample for some effective $\tau \sigma$-exceptional divisor
$E$ on $Z$.
Since $A = \tau^*A'$ for some ample divisor $A'$ on $Y$
(with $A := A_H$ and $A' := A_H'$),
$\sigma^* A - E$ is an ample divisor on $Z$ if $A$ is replaced by a positive multiple
(cf. \cite[Proposition 1.45]{KM}).
By Claim \ref{Hper} and the construction of the blowup $\sigma$,
we have $\dim \tau\sigma(E) \le 0$, so $\sigma^*A . E = (\tau \sigma)^*A' . E = 0$.
The non-vanishing of $c_2(X)$ in $N^2(X)$ and Lemma \ref{c2} imply the existence
of a nonzero effective $1$-cycle
$C := c_2(X) . M_1 \dots M_{n-3}$.
Since ampleness is an open condition, replacing $M_i$ by a small positive multiple, we may write
$$\sigma^*A - E = \sigma^*M_i + P_i$$
for some ample $\R$-divisor $P_i$.
Since the difference $\sigma_*c_2(Z) - c_2(X)$ (as $(n-2)$-cycles)
lies in the centre of the blowup of $\sigma : Z \to X$ (i.e., in the exceptional locus
of $\tau : X \to Y$ by the construction of $\sigma$) and hence is contracted by $\tau$
to a finite subset of $Y$, and since $A = \tau^*A'$, we have
$\sigma_* c_2(Z) . A = c_2(X) . A$ as $(n-3)$-cycles on $X$.

Consider the $1$-cycle
$\ell(i_1, \dots, i_s) := \sigma_*(c_2(Z) . P_{i_1} \dots P_{i_s}) A^{n-s-3}$ on $X$.
Since $\sigma : Z \to X$ is isomorphic
outside the exceptional locus of $\tau : X \to Y$ by the construction
and the intersection of $c_2(X)$ with $\Nef(X) \times \cdots \times \Nef(X)$ ($n-3$ of them)
lies in $\NE(X)$ (Miyaoka's pseudo-effectivity),
our $\ell(i_1, \dots, i_s) = \ell_1(i_1, \dots, i_s) + \ell_2(i_1, \dots, i_s)$ with $\ell_1$ effective and $\ell_2$ supported on
the exceptional locus of $\tau$ and hence $\ell_2(i_1, \dots, i_s) . A = \ell_2(i_1, \dots, i_s) . \tau^*A' = 0$
(cf. Claim \ref{Hper}).

Now Miyaoka's pseudo-effectivity of $c_2$ for minimal variety, the construction of
the pseudo-effective sequence in Lemma \ref{c2},
$\sigma_* c_2(Z) . A = c_2(X) . A$, the projection formula,
$\sigma^*A . E = 0$ and Lemma \ref{Hfix} imply:
$$\begin{aligned}
0 &\le c_2(X) . M_1 \dots M_{n-3} . A
= \sigma_*c_2(Z) . M_1 \dots M_{n-3} . A
= c_2(Z) . \sigma^*M_1 \dots \sigma^*M_{n-3} . \sigma^*A \\
&= c_2(Z) . (\sigma^*A - P_1 - E) \dots (\sigma^*A - P_{n-3} - E) . \sigma^*A \\
&= c_2(Z) . (\sigma^*A - P_1) \dots (\sigma^*A - P_{n-3}) . \sigma^*A \\
&= c_2(Z) . (\sigma^*A)^{n-2}
- \sum_{i_1, \dots, i_s} (c_2(Z) . P_{i_1} \dots P_{i_s}) (\sigma^*A)^{n-s-2} \\
&= \sigma_*c_2(Z) . A^{n-2}
- \sum_{i_1, \dots, i_s} \sigma_*(c_2(Z) . P_{i_1} \dots P_{i_s}) A^{n-s-2} \\
&= c_2(X) . A^{n-2}
- \sum_{i_1, \dots, i_s} (\ell_1(i_1, \dots, i_s) + \ell_2(i_1, \dots, i_s)) A \\
&= c_2(X) . A^{n-2}
- \sum_{i_1, \dots, i_s} \ell_1(i_1, \dots, i_s). A
\le c_2(X) . A^{n-2} = 0.
\end{aligned}$$
Thus the effective $1$-cycle $C = c_2(X) . M_1 \dots M_{n-3}$
satisfies $C . A_H = 0$ (with $A = A_H$).
Since $C$ is a $G$-eigenvector (cf. Lemma \ref{c2}), we then have $C . u_i^*A_H = 0$
(with $u_i^*A_H = A_{H_{u_i}}$) for all $i$.
Hence $C \in F_G = 0$. This contradicts $C \ne 0$ (cf. Lemma \ref{c2}).
We have completed the proof of Lemma \ref{c2=0} and also Theorem \ref{ThA}.

\begin{setup}{\bf Proof of Theorem \ref{ThC}}
\end{setup}

We will use the argument till \ref{Pfprimelt}.
We may and will freely replace
$G$ by its finite-index subgroups.
We may assume that $G = N(G) \, H$ with $H = \langle g_1, \dots, g_{n-1} \rangle$
so that $H \, | \, H^{1,1}(X) \cong \Z^{\oplus n-1}$
as in Theorem \ref{ThB} and satisfies the four
assertions there.
We use the notation in the proof of Theorem \ref{ThB} and let
$$A_H := L_{g_1} + \dots + L_{g_{n-1}} + L_{g_1^{-1}}$$
be the nef and big class,
where $L_{g_j^{\pm}} \in \overline{K(X)}$ can be chosen to be common eigenvectors of $H$
since $H \, | \, H^{1,1}(X)$ is commutative.

As in Lemma \ref{Hfix}, utilizing \cite[Lemma 2.2]{nz2} (writing the nef and big class $A_H$
as the sum of a K\"ahler class and a positive real current),
$K_X$ equals zero (cohomologously).
Thus there is an \'etale finite Galois covering $\widetilde{X} = T \times S \times Y \to X$ such that
$T$ is a complex torus, $S$ is a product of hyperk\"ahler manifolds $S_i$, $Y$ is a product
of (projective) Calabi-Yau manifolds $Y_j$, and $G$, replaced by its finite-index subgroup,
lifts to some group
$$\widetilde{G} \, \le \, \Aut(T) \times \prod \Aut(S_i) \times \prod \Aut(Y_j)$$
with $\widetilde{G}/\Gal(\widetilde{X}/X) = G$ (cf. \cite[\S 3]{Be}).
As in Theorem \ref{ThA},
$N(\widetilde{G})$ is the preimage of $N(G)$, via the
quotient map $\widetilde{G} \to G$, so that $\widetilde{G}/N(\widetilde{G}) = \Z^{\oplus n-1}$, and hence
the rank $r(\widetilde{G}) = n-1$.
Since the projections of $\widetilde{X}$ to its factors $T$, $S_i$ and $Y_j$
are $\widetilde{G}$-equivariant,
the maximality of $r(\widetilde{G})$ implies that $\widetilde{X}$ equals $T$, $S = S_1$ or $Y = Y_1$,
i.e., is a complex torus, a hyperk\"ahler manifold, or a projective Calabi-Yau manifold (cf. \cite[Lemma 2.10]{Z-Tits}).

Since $X$ is non-algebraic, so is $\widetilde{X}$, and hence $\widetilde{X}$ equals $T$ or $S$.
If $\widetilde{X}$ is hyperk\"ahler, then we reach a contradiction: $2 \le n-1 = r(\widetilde{G}) \le 1$
(cf. \cite[Theorem 4.6]{KOZ}). Thus $\widetilde{X}$ is a complex torus.
Now the argument of Lemma \ref{ampleA} (easier now) implies Theorem \ref{ThC}.

\end{document}